\documentclass{gtmon_a}
\pdfoutput=1

%

\proceedingstitle{Groups, homotopy and configuration spaces (Tokyo
  2005)}
\conferencestart{5 July 2005}
\conferenceend{11 July 2005}
\conferencename{Groups, homotopy and configuration spaces,
                in honour of Fred Cohen's 60th birthday}
\conferencelocation{University of Tokyo, Japan}

\editor{Norio Iwase}
\givenname{Norio}
\surname{Iwase}

\editor{Toshitake Kohno}
\givenname{Toshitake}
\surname{Kohno}

\editor{Ran Levi}
\givenname{Ran}
\surname{Levi}

\editor{Dai Tamaki}
\givenname{Dai}
\surname{Tamaki}

\editor{Jie Wu}
\givenname{Jie}
\surname{Wu}

\title[On the Rothenberg--Steenrod spectral sequence]
{On the Rothenberg--Steenrod spectral sequence for 
the\\mod $2$ cohomology of classifying spaces of spinor groups}

%
\author{Masaki Kameko}
\givenname{Masaki}
\surname{Kameko}
\address{Department of Mathematics\\
Faculty of Regional Science\\
Toyama University of International Studies\\
65-1 Higashikuromaki\\
Toyama, 930-1292\\ Japan}
\email{kameko@tuins.ac.jp}
\urladdr{}

%
\author{Mamoru Mimura}
\givenname{Mamoru}
\surname{Mimura}
\address{Department of Mathematics\\
Faculty of Science\\
Okayama University\\
3-1-1 Tsushima-naka\\
Okayama, 700-8530\\ Japan}
\email{mimura@math.okayama-u.ac.jp}
\urladdr{}
\keyword{Steenrod square}
\keyword{classifying space}
\keyword{cohomology}
\keyword{Lie group}
\keyword{spectral sequence}
\subject{primary}{msc2000}{55R40}
\subject{secondary}{msc2000}{55T99}

%

\arxivreference{}

%
%
\volumenumber{13}
\issuenumber{}
\publicationyear{2008}
\papernumber{11}
\startpage{261}
\endpage{279}
\doi{}
\MR{}
\Zbl{}
\received{31 May 2006}
\revised{20 August 2007}
\accepted{27 August 2007}
\published{25 February 2008}
\publishedonline{25 February 2008}
\proposed{}
\seconded{}
\corresponding{}
\version{}


\makeatletter
\def\cnewtheorem#1[#2]#3{\newtheorem{#1}{#3}[section]
\expandafter\let\csname c@#1\endcsname\c@theorem}


\let\xysavmatrix\xymatrix
\def\xymatrix{\disablesubscriptcorrection\xysavmatrix}
\AtBeginDocument{\let\bar\wbar}
\let\qedhere\proved
\font\mathitt=cmmi10 scaled 1085
\font\mathits=cmmi7 scaled 1085

\newtheorem{theorem}{Theorem}[section]
\cnewtheorem{proposition}[theorem]{Proposition}
\cnewtheorem{lemma}[theorem]{Lemma}
\cnewtheorem{corollary}[theorem]{Corollary}
\theoremstyle{definition}
\cnewtheorem{definition}[theorem]{Definition}
\cnewtheorem{remark}[theorem]{Remark}

\makeatother  

\newcommand{\spin}{\mathrm{Spin}}

\newcommand{\Sq}{{\mathrm{Sq}}}
\newcommand{\bZ}{\mathbb{Z}}
\newcommand{\bF}{\mathbb{F}}

\newcommand{\cotor}{\mathrm{Cotor}}

\newcommand{\lt}{\mathop{\mathrm{lt}}}

\newcommand{\lp}{\mathop{\mathrm{lp}}}
\newcommand{\lcm}{\mathop{\mathrm{lcm}}}
\newcommand{\groebner}{Gr\"{o}bner }
\newcommand{\poincare}{Poincar\'{e} }


\begin{document}

\begin{htmlabstract}
We compute the cotorsion product of the mod 2 cohomology of spinor
group spin(n), which is the E<sub>2</sub>&ndash;term of the
Rothenberg&ndash;Steenrod spectral sequence for the mod 2 cohomology of
the classifying space of the spinor group spin(n).  As a
consequence of this computation, we show the non-collapsing of the
Rothenberg&ndash;Steenrod spectral sequence for n&ge; 17.
\end{htmlabstract}

\begin{abstract} 
We compute the cotorsion product of the mod $2$ cohomology of spinor
group $\mathrm{spin}(n)$, which is the $E_2$--term of the
Rothenberg--Steenrod spectral sequence for the mod $2$ cohomology of
the classifying space of the spinor group $\mathrm{spin}(n)$.  As a
consequence of this computation, we show the non-collapsing of the
Rothenberg--Steenrod spectral sequence for $n\geq 17$.
\end{abstract}

\begin{asciiabstract}
We compute the cotorsion product of the mod 2 cohomology of spinor
group spin(n), which is the E_2-term of the Rothenberg-Steenrod
spectral sequence for the mod 2 cohomology of the classifying space
of the spinor group spin(n).  As a consequence of this computation,
we show the non-collapsing of the Rothenberg-Steenrod spectral
sequence for n > 16.
\end{asciiabstract}

\maketitle


\section{Introduction}\label{sec1}

Let $n$ be a fixed integer greater than or equal to $9$. 
In \cite{quillen}, Quillen computed the mod $2$ cohomology of the classifying space $B\spin(n)$ using the Leray--Serre spectral sequence 
associated with the fiber bundle 
$B\pi\co B\spin(n) \to BSO(n)$.
In terms of  the Hurwitz--Radon number  $h$  given by
\[
\begin{array}{rl}
4\ell & \mbox{if $n=8\ell+1$,}\\
4\ell+1 & \mbox{if $n=8\ell+2$,}\\
4\ell+2 & \mbox{if $n=8\ell+3$ or $8\ell+4$,}\\
4\ell+3 &\mbox{if $n=8\ell+5$, $8\ell+6$, $8\ell+7$ or $8\ell+8$,}\\
\end{array}
\]
Quillen's result is stated as follows:
\begin{theorem}[Quillen]
As a graded $\bF_2$--algebra, we have
\[
H^{*}(B\spin(n);\bF_2)=
\bF_2[w_2,\ldots, w_n]/J \otimes \bF_2[z],
\]
where $J=(v_0, \ldots, v_{h-1})$, $v_0=w_2$, $v_k=\Sq^{2^{k-1}}\cdots \Sq^1 w_2$ for $1\leq k\leq h-1$ and $\deg z=2^{h}$.
Moreover, $v_0,\ldots, v_{h-1}$ is a regular sequence and the \poincare series is given by
\[
{\displaystyle \prod_{k=0}^{h-1} (1-t^{2^k+1})}\left\slash {\displaystyle\left\{ (1-t^{2^h}) \prod_{k=2}^{n} (1-t^k)\right\}}  .\right.
\]
\end{theorem}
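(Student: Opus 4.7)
The plan is to run the Serre spectral sequence of the fibration
\[
B\bZ/2 \longrightarrow B\spin(n) \longrightarrow BSO(n)
\]
associated with the central extension $1 \to \bZ/2 \to \spin(n) \to SO(n) \to 1$. The $E_2$--page is $\bF_2[w_2,\ldots,w_n]\otimes \bF_2[t]$ with $|t|=1$, where $t$ is the generator of $H^1(B\bZ/2;\bF_2)$. Since $w_2$ is precisely the obstruction to a $\spin$--structure on the tautological bundle over $BSO(n)$, the first transgression is $d_2(t)=w_2=v_0$. Kudo's transgression theorem then propagates this inductively: $t^{2^k}$ survives to $E_{2^k+1}$ and transgresses as
\[
d_{2^k+1}(t^{2^k}) \;=\; \Sq^{2^{k-1}}\cdots \Sq^1 w_2 \;=\; v_k.
\]

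The main technical input splits into two assertions. First, $v_0,\ldots,v_{h-1}$ is a regular sequence in $\bF_2[w_2,\ldots,w_n]$; this is a concrete check in the polynomial ring via a leading--term argument. Second, $v_h$ lies in the ideal $(v_0,\ldots,v_{h-1})$, so that the next potential transgression vanishes at $E_{2^h+1}$ and $t^{2^h}$ becomes a permanent cycle, which one names $z$. The second assertion is precisely what pins down the Hurwitz--Radon number $h$: for each residue of $n$ modulo $8$, one would expand $\Sq^{2^{h-1}}\cdots \Sq^1 w_2$ using the Cartan formula and Wu relations until it can be rewritten in terms of the earlier $v_j$'s and the Stiefel--Whitney classes $w_2,\ldots,w_n$ available for that value of $n$.

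Once $t^{2^h}$ is a permanent cycle, no higher differential can act on it or target it for degree reasons (the only candidate targets, multiples of the $v_j$'s, have already been killed), so the spectral sequence collapses at $E_{2^h+1}$ to
\[
E_\infty \;=\; \bigl(\bF_2[w_2,\ldots,w_n]/(v_0,\ldots,v_{h-1})\bigr)\otimes \bF_2[z].
\]
The algebra extension problem is immediate over $\bF_2$ in these degrees, and the \poincare series formula follows by combining the Koszul-type product formula for a regular sequence quotient with the factor $1/(1-t^{2^h})$ for $\bF_2[z]$. The main obstacle in this program is the explicit Steenrod algebra computation establishing $v_h \in (v_0,\ldots,v_{h-1})$: this is where the case--by--case structure of the Hurwitz--Radon number, tied to $n \bmod 8$, genuinely enters the argument and dictates the four branches in the definition of $h$.
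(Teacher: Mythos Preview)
The paper does not prove this statement at all: Theorem~1.1 is quoted from Quillen's paper \cite{quillen} as background, and the authors' own work concerns the cotorsion product $\cotor_{H^*(\spin(n);\bF_2)}(\bF_2,\bF_2)$ (Theorem~1.2), computed via the change-of-rings spectral sequence rather than the Leray--Serre spectral sequence you invoke. So there is no ``paper's own proof'' to compare against; your outline is in fact a sketch of Quillen's original argument.

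As a sketch it is accurate: the fibration $B\bZ/2 \to B\spin(n) \to BSO(n)$, the transgression $d_2(t)=w_2$, and Kudo's theorem giving $d_{2^k+1}(t^{2^k})=v_k$ are exactly the engine Quillen uses. You correctly identify the two substantive obligations --- that $v_0,\ldots,v_{h-1}$ is a regular sequence and that $v_h\in(v_0,\ldots,v_{h-1})$ --- and you are honest that you have not discharged them. Those are not minor bookkeeping: Quillen's proof of the second point does not proceed by a direct Wu-formula expansion case-by-case on $n\bmod 8$, but rather by relating the $v_k$ to the Stiefel--Whitney classes of the spin representation $\Delta\co \spin(n)\to O(2^h)$ and invoking that $w_1(\Delta)=w_2(\Delta)=0$; the Hurwitz--Radon number enters as the dimension of that representation. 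Your proposed brute-force route would be considerably harder to execute. Finally, the claim that the multiplicative extension problem is ``immediate over $\bF_2$ in these degrees'' is a bit glib: what actually settles it is a Poincar\'e-series comparison after choosing any lift of $t^{2^h}$, which is easy but should be said.
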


On the other hand, the Rothenberg--Steenrod spectral sequence can often be the most powerful  tool for computing the mod $p$ cohomology of the classifying space $BG$
from the mod $p$ cohomology of the underlying connected compact Lie group $G$.
Its $E_2$--term is given by the cotorsion product 
\[
\cotor_{H^*(G;\bF_p)}(\bF_p, \bF_p)
\]
and it converges to the mod $p$ cohomology of the classifying space
$BG$.  Recently, we proved in \cite{kamekomimura} the non-degeneracy
of the Rothenberg--Steenrod spectral sequence for the mod $3$
cohomology of the classifying space $BE_8$ of the exceptional Lie
group $E_8$.  Until this paper all computational results in literature
indicated that the Rothenberg--Steenrod spectral sequence collapses at
the $E_2$--level.  Although it is not in literature, it has been a
folklore to experts for a long time that the Rothenberg--Steenrod
spectral sequence for the mod $2$ cohomology of the classifying space
$B\spin(n)$ does not collapse at the $E_2$--level for some $n$.  In
the case $n=2^{s-1}+1$, for example, it is easy to compute the
cotorsion product.  Since the mod $2$ cohomology of $\spin(2^{s-1}+1)$
is a primitively generated Hopf algebra, its cotorsion product is a
polynomial algebra $\bF_2[w_k]\otimes \bF_2[z']$ where $4\leq k\leq
2^{s-1}$, $k\not = 2^\ell+1$ ($\ell=1, \ldots, s-2$) and $\deg
z'=2^{s}$.  However, the mod $2$ cohomology of $B\spin(2^{s-1}+1)$ is
not a polynomial algebra for $s\geq 5$. So, comparing their \poincare
series, it is easy to deduce that the Rothenberg--Steenrod spectral
sequence does not collapse at the $E_2$--level.  In this paper,
through the computation of the cotorsion product
\[
\cotor_{H^*(\spin(n);\bF_2)}(\bF_2, \bF_2)
\]
for all $n\geq 9$, we give a proof for the non-degeneracy 
of the Rothenberg--Steenrod spectral sequence for all $n\geq 17$.

Let $s$ be an integer such that
\[
2^{s-1}<n \leq 2^{s}.
\]
In \fullref{sec2}, we define an integer $h'$ for $n\geq 9$. 
Using the integers $s$ and $h'$, our main result is stated as follows:
\begin{theorem} \label{sec12}
Let $A=H^{*}(\spin(n);\bF_2)$.
Suppose that $n\geq 9$.
Then, we have an isomorphism of graded $\bF_2$--algebras
\[
\cotor_{A}(\bF_2, \bF_2)=\bF_2[w_{2},\ldots, w_{n}]/J' \otimes \bF_2[z'],
\]
where 
$J'=(v_0, \ldots, v_{h'-1})$,  $v_0=w_2$,
\[
v_k=\underbrace{\Sq^0\cdots \Sq^0}_{\mbox{\rm $k$--times}} v_0\;\;\;\mbox{\rm ($k=1,\ldots, s-1$)},
\]
\[
v_s=\sum_{i+j=2^{s-1}} w_{2i+1}w_{2j}, 
\]
and
\[
v_{s+k}=\Sq^{2^{k-1}}\cdots \Sq^{1}v_s\;\;\;\mbox{\rm ($k\geq 1$).}
\] 
Moreover, the sequence $v_0, \ldots, v_{h'-1}$ is a regular sequence and 
the \poincare series of the cotorsion product is given by
\[
{\displaystyle \prod_{k=0}^{h'-1} (1-t^{2^{k}+1})}
\left\slash
{\displaystyle \left\{ (1-t^{2^{h'}}) \prod_{k=2}^{n} (1-t^{k})\right\}} .
\right.
\]
\end{theorem}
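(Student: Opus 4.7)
The plan is to exploit the Hopf algebra structure of $A = H^{*}(\spin(n);\bF_2)$, reducing the cotorsion computation to the primitively generated case via a well-chosen sub-Hopf-algebra, and then incorporating the ``spin class'' by means of a change-of-rings spectral sequence.

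First, I would recall (following Borel and Quillen) that $H^{*}(SO(n);\bF_2)$, as a Hopf algebra, is a tensor product of truncated polynomial algebras $\bF_2[x_i]/(x_i^{2^{a_i}})$ on primitive generators $x_i$ of odd degree $i \geq 1$, and that the mod $2$ cohomology of $\spin(n)$ is obtained from this by killing the action of the degree--$1$ class $x_1$ and adjoining a new class $y$ of degree $2^{h'}$ whose coproduct is expressible in the $x_i$. Let $B \subset A$ be the sub-Hopf-algebra generated by the images of the $x_i$ for $i \geq 3$; then $B$ is primitively generated and splits as a tensor product of monogenic pieces.

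Second, since $\cotor$ is multiplicative for tensor products of Hopf algebras, $\cotor_B(\bF_2,\bF_2)$ is the tensor product of the cotorsion products of the monogenic factors $\bF_2[x_i]/(x_i^{2^{a_i}})$, each of which is a polynomial algebra on two generators (a bar suspension $\sigma x_i$ of total degree $i+1$ and a transpotence $\phi x_i$). Assembling these I would identify the $\sigma x_i$ with the Stiefel--Whitney classes $w_{i+1}$ appearing in the statement, and the transpotences $\phi x_i$ with certain higher-degree $w_{2m}$ (via the Frobenius-type relations) together with the polynomial generator $z'$.

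Third, to incorporate the spin class $y$, I would run the Cartan--Eilenberg change-of-rings spectral sequence for the extension of Hopf algebras $\bF_2 \to B \to A \to C \to \bF_2$, where $C$ is the quotient. The class $y$ is primitive in $C$, so $\cotor_C(\bF_2,\bF_2)$ contributes a bar-degree-$1$ generator $\sigma y$, but the non-primitivity of $y$ in $A$ translates to a differential sending $\sigma y$ to the element $v_s = \sum_{i+j = 2^{s-1}} w_{2i+1} w_{2j}$, computed directly from the coproduct of $y$. The remaining relations $v_1,\dots,v_{s-1}$ and $v_{s+1},v_{s+2},\dots$ follow from compatibility of this differential with the Steenrod operations acting on the cotorsion product, and one must check that no further differentials occur. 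Finally, I would verify that $v_0,\dots,v_{h'-1}$ form a regular sequence in $\bF_2[w_2,\dots,w_n]$ by inspecting leading monomials in a suitable order, and then deduce the \poincare series formula from the standard identity for a complete intersection tensored with a free polynomial algebra.

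The principal obstacle is the third step: pinning down the unique nontrivial differential in the Cartan--Eilenberg spectral sequence and ruling out further differentials. This demands an accurate description of the coproduct of $y$ in $A$, together with precise control over the \Sq--operations on the cotorsion product that produce the relations $v_k$ from $v_0$ and $v_s$.
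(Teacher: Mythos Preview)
Your outline follows the paper's proof closely: the extension of Hopf algebras $\Delta \to A \to \Lambda(y)$, the change-of-rings spectral sequence with $E_2 = R \otimes \bF_2[\zeta]$, the first differential $d_2\zeta = v_s$ read off from the reduced coproduct of $y$, the Kudo-type propagation $d_{2^k+1}(\zeta^{2^k}) = \Sq^{2^{k-1}}\cdots\Sq^1 v_s = v_{s+k}$ via the Steenrod action on the spectral sequence, and the regular-sequence verification by inspecting leading monomials under a suitable term order. That is exactly the architecture of Section~5.

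There are, however, two genuine problems beyond loose wording. First, some factual slips: the class $y$ has degree $2^{s}-1$, not $2^{h'}$; the relations $v_0,\dots,v_{s-1}$ do not arise from spectral-sequence differentials but already vanish in $R=\cotor_\Delta(\bF_2,\bF_2)$, since $v_k=w_{2^k+1}=[x_{2^k}]$ and $x_{2^k}=0$ in $\Delta$; and $\cotor_\Delta$ contains no transpotences at all (over $\bF_2$ the coalgebra $\bF_2[x]/(x^{2^a})$ with $x$ primitive splits as $\bigotimes_j \Lambda(x^{2^j})$, so $\cotor$ is polynomial on the bar-degree-$1$ classes $[x^{2^j}]$), whence $z'$ is certainly not a transpotence in $\cotor_B$ --- it is the surviving power $\zeta^{2^{h'-s}}$ coming from the \emph{quotient} $\Lambda(y)$. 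Second, and more seriously, you name ``ruling out further differentials'' as the principal obstacle but supply no mechanism. This is not a formality: one must prove that $v_{h'}=0$ in $R$, so that $\zeta^{2^{h'-s}}$ is a permanent cycle. In the paper this is \fullref{sec51}, and it rests on the combinatorics of Section~\ref{sec2}: the terms of $v_{s+k}$ are of the form $w_i^{2^k}w_{2^k(j-1)+1}$ with $i,j\in D$, and \fullref{sec23} shows that for $k=h'-s$ every subscript $2^k(j-1)+1$ exceeds $n$, forcing $v_{h'}=0$. Without this step (and the attendant definitions of $m$, $m'$, $\varepsilon$, and the very number $h'$), the argument does not terminate and the identification of $z'$ is unjustified.
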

A caution is called for; the action of Steenrod squares in \fullref{sec12} is the one defined for the cotorsion product. 
It is not the one induced by the action of Steenrod squares on $A=H^*(\spin(n);\bF_2)$. In particular, $\Sq^0$ is not the identity homomorphism.
We recall the action of Steenrod square on the cotorsion product in \fullref{sec4}.
After defining the integer $h'$, we prove the following proposition in \fullref{sec2}.
\begin{proposition}\label{sec13}
For $9\leq n \leq 16$, we have $h'=h$.
For $n\geq 17$, we have $h'<h$. 
\end{proposition}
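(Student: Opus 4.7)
My plan is to prove the two halves of the proposition separately by direct comparison of the formulas for $h$ (recalled in the introduction) and for $h'$ (given in \fullref{sec2}).

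For the range $9\leq n\leq 16$, we have $s=4$, so there are only eight values of $n$ to check. Using the case-by-case formula for the Hurwitz--Radon number, $h$ takes the values $4,5,6,6,7,7,7,7$ as $n$ runs from $9$ to $16$. I would substitute each such $n$ into the definition of $h'$ from \fullref{sec2} and verify that $h'$ produces the same list. This is pure bookkeeping, exploiting the small-$s$ coincidence that the mild ``cotorsion'' bound on $h'$ happens to agree with Quillen's piecewise-linear formula for $h$ on precisely the eight relevant residues.

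For $n\geq 17$, we have $s\geq 5$. Here $h$ grows roughly linearly in $n$ (taking values $4\ell,4\ell{+}1,4\ell{+}2,4\ell{+}3$ as $n$ sweeps consecutive blocks of eight), whereas by construction $h'$ is controlled in terms of $s$ together with a mild correction coming from how many of the elements $v_{s+k}=\Sq^{2^{k-1}}\cdots\Sq^{1}v_s$ continue to extend a regular sequence in $\cotor_A(\bF_2,\bF_2)$. Since $s=\lceil\log_2 n\rceil$ grows only logarithmically, $h'$ grows strictly slower than $h$, and the strict inequality $h'<h$ follows. The delicate point is the boundary $n=17$, where $s=5$ and $h=8$: I would check directly from the recipe of \fullref{sec2} that $h'\leq 7$, already one short of Quillen's $h$. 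For $18\leq n\leq 32$ I would compare the two numerical formulas side by side, and the inequality for $n\geq 33$ then propagates by induction on $s$, since the increments to $h$ as $n$ grows out-pace the increments to $h'$.

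The main obstacle is the explicit verification at the critical boundary $n=17$. One must unwind the precise definition of $h'$ from \fullref{sec2}, identify which Steenrod-square iterate $v_{s+k}$ first fails to extend the regular sequence, and confirm that this failure occurs strictly before Quillen's index $h=8$ is reached. Once this boundary step is in hand, the gap between $h'$ and $h$ only widens as $n$ increases, and the proposition for all $n\geq 17$ follows by the comparison of growth rates described above.
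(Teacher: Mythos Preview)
Your strategy matches the paper's: direct tabulation for the small range (the paper does all of $9\leq n\leq 32$ in two tables), followed by a growth-rate comparison for $n\geq 33$. There is, however, a conceptual confusion in your description of $h'$ that would send you down an unnecessary detour. The integer $h'$ is defined in \fullref{sec2} \emph{purely arithmetically}: $h'=s$ if the set $D$ is empty, and $h'=2s-t+\varepsilon$ otherwise, where $t$, $m$, $m'$, $\varepsilon$ are explicit functions of $n$. It is \emph{not} defined as the length of a regular sequence of Steenrod iterates; that interpretation is the content of Propositions~\ref{sec51} and~\ref{sec52}, proved only later in \fullref{sec5}, and is irrelevant to \fullref{sec13}. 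So at the ``critical boundary'' $n=17=2^{4}+1$ one simply notes $D=\emptyset$ (this is \fullref{sec2}'s first proposition), hence $h'=s=5<8=h$; there is nothing to unwind about $v_{s+k}$.

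For $n\geq 33$ the paper's argument is sharper than your proposed induction on $s$. Since $t\geq 1$ and $\varepsilon\leq 1$, one has the uniform bound $h'\leq\max\{s,\,2s-t+\varepsilon\}\leq 2s$. Writing $n=8\ell+r$ with $1\leq r\leq 8$, the inequality $2^{s-1}<n\leq 8\ell+8$ gives $2^{s-2}<4\ell+4$, so $h\geq 4\ell>2^{s-2}-4\geq 2s$ once $s\geq 6$. This single chain of inequalities makes your linear-versus-logarithmic heuristic precise and avoids any inductive step.
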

Thus, we have the following theorem.
\begin{theorem}
For $ n \leq 16$, the Rothenberg--Steenrod spectral sequence for the mod $2$ 
cohomology $H^{*}(B\spin(n);\bF_2)$ collapses at the $E_2$--level.
For $n\geq 17$, 
the Rothenberg--Steenrod spectral sequence for the mod $2$ 
cohomology $H^{*}(B\spin(n);\bF_2)$ does not collapse at the $E_2$--level.
\end{theorem}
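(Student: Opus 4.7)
The approach is to compare Poincar\'e series using \fullref{sec12}, \fullref{sec13}, and Quillen's theorem. The Rothenberg--Steenrod spectral sequence converges strongly to $H^*(B\spin(n);\bF_2)$, so its $E_\infty$--term is the associated graded of a filtration on the target; since $H^*(\spin(n);\bF_2)$ is finite dimensional this filtration is bounded in each total degree, and the Poincar\'e series of $E_\infty$ equals that of $H^*(B\spin(n);\bF_2)$. Each differential $d_r$ can only decrease dimensions, so collapse at the $E_2$--level is equivalent to $E_2$ and $H^*(B\spin(n);\bF_2)$ having the same Poincar\'e series.

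For $9 \le n \le 16$, \fullref{sec13} gives $h' = h$, and then the formula for the Poincar\'e series of $E_2 = \cotor_A(\bF_2,\bF_2)$ in \fullref{sec12} is literally the same rational function as Quillen's formula for the Poincar\'e series of $H^*(B\spin(n);\bF_2)$. Hence the spectral sequence collapses at $E_2$. The cases $n \le 8$ are classical and follow from known explicit computations of $H^*(B\spin(n);\bF_2)$ for small $n$.

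For $n \ge 17$, \fullref{sec13} gives $h' < h$. If the spectral sequence were to collapse, equating the two Poincar\'e series and cancelling the common factors $\prod_{k=2}^n(1-t^k)$ and $\prod_{k=0}^{h'-1}(1-t^{2^k+1})$ would force the polynomial identity
\[
1-t^{2^h} \;=\; (1-t^{2^{h'}}) \prod_{k=h'}^{h-1}(1-t^{2^k+1}).
\]
The right--hand side has coefficient $-1$ at $t^{2^{h'}}$ (the only contribution comes from taking $-t^{2^{h'}}$ out of $(1-t^{2^{h'}})$ and $1$ from each remaining factor, since every other factor has no nonzero term below degree $2^{h'}+1$), while the left--hand side has no nonzero term below degree $2^h > 2^{h'}$. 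This contradiction rules out collapse.

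The substantive work of the paper lies entirely in \fullref{sec12} and \fullref{sec13}; once those are in hand, the final theorem is an immediate consequence of the Poincar\'e series comparison above, so I do not expect any serious obstacle at this last step beyond verifying the standard strong convergence of the Rothenberg--Steenrod spectral sequence in this setting.
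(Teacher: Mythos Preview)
Your argument is correct and is exactly the approach the paper intends: the paper simply writes ``Thus, we have the following theorem'' immediately after \fullref{sec13}, treating the result as an immediate consequence of comparing the Poincar\'e series in \fullref{sec12} and Quillen's theorem via the equality or inequality $h'=h$ or $h'<h$. Your explicit verification that the two series disagree when $h'<h$ (via the coefficient at $t^{2^{h'}}$) just fills in the routine step the paper leaves to the reader.
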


The cotorsion products appear in other settings. There exist spectral sequences 
converging to the mod $p$ cohomology of classifying spaces of loop groups as well as to
 the one of classifying spaces of finite Chevalley groups.
Both spectral sequences have the same $E_2$--term:
\[
\cotor_{H^{*}(G;\bF_p)}(\bF_p, H^{*}(G;\bF_p)).
\]
In the case $G=\spin(10)$, $p=2$, the computation of the above
cotorsion product is done in Kuribayashi, Mimura and Nishimoto
\cite{kuribayashimimuranishimoto} using the twisted tensor product.
However, it seems to be not so easy to carry out their computation for
$n>10$.  In this paper, we use the change-of-rings spectral sequence
and Steenrod squares as our tools.  We hope that the computation done
in this paper can shed some light on the computation of the cotorsion
products
\[
\cotor_{H^{*}(G;\bF_p)}(\bF_p, H^{*}(G;\bF_p)).
\]
In \fullref{sec2}, we define integers $s$, $t$, $m$, $m'$,
$\varepsilon$, $h'$ and sets $C$, $D$, $E$ and prove some elementary
properties of these integers and sets as well as \fullref{sec13}.  We
use these integers and sets in order to describe generators and
relations of cotorsion products in \fullref{sec5}.  In \fullref{sec3},
we give a naive criterion for a sequence in a polynomial ring over a
field to be a regular sequence in terms of \groebner bases.  In
\fullref{sec4}, we recall some results on the Steenrod squares acting
on cotorsion products and the change-of-rings spectral sequence.  In
\fullref{sec5}, we prove \fullref{sec12} using the results in
Sections \ref{sec3} and \ref{sec4}.

We thank W Singer for showing us the manuscript of his book~\cite{singer}.
We also thank the referee for his/her careful reading of the manuscript.
The first named author was partially supported by Japan Society for the Promotion of Science, Grant-in-Aid for Scientific Research (C) 19540105
when preparing for the revised version of this paper.

\section{Integers $s$, $t$, $h'$}\label{sec2}

In this section, for a given integer $n\geq 9$, 
we define integers $s$, $t$, $m$, $m'$, $\varepsilon$, $h'$ and sets $C$, $D$, $E$ and
prove some elementary properties of these integers and sets.
We use these integers, sets and their properties in \fullref{sec5} in order to describe generators and relations, in particular  $v_{s+k}$ in Theorem 1.2, of cotorsion products. 
We do not use the results in this section until \fullref{sec5}.
Throughout  this section, we assume that $n$ is a fixed integer greater than or equal to $9$.

To begin with, we define integers $s$, $t$, $m$, $m'$ and $\varepsilon$.
For a positive integer $k$, let $\alpha(k)$  be the number of $1$'s in the binary expansion of $k$.
Let $s$ be an integer such that 
\[
2^{s-1}< n \leq 2^s.
\]
For $n<2^s-2$, let $t$ be an integer such that 
\[
2^{s}-2^t-1\leq n <2^s-2^{t-1}-1,
\]
and for  $n=2^s, 2^s-1, 2^s-2$, 
let $t=1$.

Let us consider a set of integers
\[
E=\{ k \in \bZ\;|\; 2\leq k \leq n, \alpha(k-1)\geq 2\},
\]
and its subset
\[
D=\{ k\in \bZ\;|\; k\leq n, 2^s-k+1\leq n, \alpha(k-1)\geq 2, \alpha(2^s-k)\geq 2 \}.
\]
It is easy to verify the following proposition.
\begin{proposition}
The set $D$ is empty if and only if  $n=2^{s-1}+1$.
\end{proposition}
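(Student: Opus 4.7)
The plan is to prove the proposition by producing an explicit element of $D$ when $n \ne 2^{s-1} + 1$ and by inspecting the (only two) possible candidates when $n = 2^{s-1} + 1$. Recall that $k \in D$ iff
\[
2^s - n + 1 \leq k \leq n, \qquad \alpha(k-1) \geq 2, \qquad \alpha(2^s - k) \geq 2.
\]

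For the ``if'' direction, assume $n = 2^{s-1} + 1$. Then the range $2^s - n + 1 \leq k \leq n$ collapses to $k \in \{2^{s-1},\, 2^{s-1}+1\}$. Both candidates fail one of the $\alpha$-inequalities: at $k = 2^{s-1}$ we have $\alpha(2^s - k) = \alpha(2^{s-1}) = 1$, and at $k = 2^{s-1}+1$ we have $\alpha(k-1) = \alpha(2^{s-1}) = 1$. Hence $D$ is empty.

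For the ``only if'' direction, I prove the contrapositive: if $n \ne 2^{s-1} + 1$, then by definition of $s$ we have $n \geq 2^{s-1} + 2$, and I claim that $k_0 := 2^{s-1} + 2$ lies in $D$. The range inequalities reduce to $k_0 \leq n$ (by hypothesis) and $2^s - k_0 + 1 = 2^{s-1} - 1 \leq n$ (automatic). The $\alpha$-conditions hold because $k_0 - 1 = 2^{s-1} + 1$ gives $\alpha(k_0-1) = 2$, while $2^s - k_0 = 2^{s-1} - 2 = 2(2^{s-2}-1)$ has $\alpha(2^s - k_0) = s - 2$. The hypothesis $n \geq 9$ is precisely what guarantees $s \geq 4$, hence $s - 2 \geq 2$. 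There is no genuine obstacle here: the whole proposition reduces to picking the right witness $k_0$ and performing a brief binary-arithmetic check, and the only subtle point is noting that the standing assumption $n \geq 9$ is exactly what rules out the small cases in which $\alpha(2^{s-1} - 2)$ would be too small.
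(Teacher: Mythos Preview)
Your proof is correct and follows essentially the same approach as the paper: both use the explicit witness $k_0 = 2^{s-1}+2$ for the ``only if'' direction and check the two candidates $2^{s-1}$, $2^{s-1}+1$ for the ``if'' direction. Your write-up is slightly more explicit about why the standing hypothesis $n \geq 9$ (hence $s \geq 4$) is needed, but the argument is the same.
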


\begin{proof}
Since $n \geq 9$, we may assume that $s\geq 4$.
Let $k=2^{s-1}+2$.
Then,  we have $\alpha(k-1)=2$ and  $\alpha(2^s-k)=s-2\geq 2$. 
Thus, if $n\geq  2^{s-1}+2$, we have $k\in D$.
If $n=2^{s-1}+1$ and $k' \in D$, then $2^{s}-(2^{s-1}+1)+1\leq k' \leq 2^{s-1}+1$. So, we have $k'=2^{s-1}$ or $2^{s-1}+1$.
Since $\alpha(2^s-2^{s-1})=1$ and $\alpha((2^{s-1}+1)-1)=1$, $2^{s-1}, 2^{s-1}+1\not\in D$. Therefore, $D$ is empty.
\end{proof}

When $D$ is not empty, let $m$ be the greatest integer in $D$, put
\[
m'=2^{s-t}(2^s-m)+1,
\]
and let us define $\varepsilon$ as follows: 
\[
\begin{array}{ll}
\varepsilon=0 & \mbox{if  $m'>n$,} \\
\varepsilon=1 & \mbox{if  $m'\leq n$.}
\end{array}
\]
We also define $h'$ as follows:
\[
\begin{array}{ll}
h'=s & \mbox{if $D=\emptyset$,} \\
h'=2s-t+\varepsilon & \mbox{if $D \not = \emptyset$.}
\end{array}
\]
Next, we prove \fullref{sec13} by computing $h'$ for $9\leq n \leq 32$ and 
by showing that the inequality $h'<h$ holds for $n\geq 33$. 

\begin{proof}[Proof of \fullref{sec13}]
For $n \leq 32$, by direct computation, we have the following tables.
\[
\begin{array}[t]{c|c|c|c|c|c|c|c|c}
n & s & t & m & m' & \varepsilon & h' & \ell & h \\ \hline
9 & 4 & 3 & - & - & - & 4 & 1 & 4 \\
10 & 4 & 3 & 10 & 13 & 0 & 5 & 1 & 5 \\
11 & 4 & 2 & 11 & 21 & 0 & 6 & 1 & 6 \\
12 & 4 & 2 & 11 & 21 & 0 & 6 & 1 & 6 \\
13 & 4 & 1 & 13 & 25 & 0 & 7 & 1 & 7 \\
14 & 4 & 1 & 13 & 25 & 0 & 7 & 1 & 7 \\
15 & 4 & 1 & 13 & 25 & 0 & 7 & 1 & 7 \\
16 & 4 & 1 & 13 & 25 & 0 & 7 & 1 & 7 
\end{array}\;\;\;
\begin{array}[t]{c|c|c|c|c|c|c|c|c}
n & s & t & m & m' & \varepsilon & h' & \ell & h \\ \hline
17 & 5 & 4 & - & - & - & 5 & 2 & 8 \\
18 & 5 & 4 & 18 & 29 & 0 & 6 & 2 & 9 \\
19 & 5 & 4 & 19 & 27 & 0 & 6 & 2 & 10 \\
20 & 5 & 4 & 20 & 25 & 0 & 6 & 2 & 10 \\
21 & 5 & 4 & 21 & 23 & 0 & 6 & 2 & 11 \\
22 & 5 & 4 & 22 & 21 & 1 & 7 & 2 & 11 \\
23 & 5 & 3 & 23 & 37 & 0 & 7 & 2 & 11 \\
24 & 5 & 3 & 23 & 37 & 0 & 7 & 2 & 11 \\
25 & 5 & 3 & 25 & 29 & 0 & 7 & 3 & 12 \\
26 & 5 & 3 & 26 & 25 & 1 & 8 & 3 & 13 \\
27 & 5 & 2 & 27 & 41 & 0 & 8 & 3 & 14 \\
28 & 5 & 2 & 27 & 41 & 0 & 8 & 3 & 14 \\
29 & 5 & 1 & 29 & 49 & 0 & 9 & 3 & 15 \\
30 & 5 & 1 & 29 & 49 & 0 & 9 & 3 & 15 \\
31 & 5 & 1 & 29 & 49 & 0 & 9 & 3 & 15 \\
32 & 5 & 1 & 29 & 49 & 0 & 9 & 3 & 15 
\end{array}
\]
Next, we deal with the case $n\geq 33$.  In this case, we may assume that $s\geq 6$. 
By the definition of $t$, we have $t\geq 1$.
So, we have $\max\{ 2s-t+\varepsilon, s\} \leq 2s$.  Therefore, it suffices to show the inequality $2s< h$.
Assume that $n=8\ell+r$ where $1\leq r \leq 8$.
Then, by the definition of $s$, we have 
\[
2^{s-1}<8\ell+r\leq 8\ell+8.
\]
Hence, we have 
\[
2^{s-2}< 4\ell +4.
\]
Therefore, we obtain
\[
h\geq 4\ell>2^{s-2}-4\geq  2s
\]
for $s\geq 6$ as required.
\end{proof}

We prove some elementary properties of $D$, say Propositions~\ref{sec22} and \ref{sec23}, which we need in the proof of 
\fullref{sec51}.

\begin{proposition}\label{sec22}
Suppose that $D$ is not empty.
If $k \in D$, then $2^s-k+1\in D$.
\end{proposition}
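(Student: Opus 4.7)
The plan is to verify directly the four conditions defining $D$. Writing $k' = 2^s - k + 1$, I need to show that $k' \leq n$, that $2^s - k' + 1 \leq n$, that $\alpha(k'-1) \geq 2$, and that $\alpha(2^s - k') \geq 2$.

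The first observation is that the map $k \mapsto 2^s - k + 1$ is an involution. Computing explicitly, $2^s - k' + 1 = 2^s - (2^s - k + 1) + 1 = k$, and similarly $k' - 1 = 2^s - k$ while $2^s - k' = k - 1$. Therefore the four defining conditions for $k' \in D$ become respectively the second, first, fourth, and third defining conditions for $k \in D$.

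So the proof will occupy only a couple of lines: set $k' = 2^s - k + 1$, note the involution identities above, and read off the four membership conditions for $k'$ from those already known for $k$. There is no obstacle; the statement is essentially the observation that the definition of $D$ is symmetric under the involution $k \mapsto 2^s - k + 1$, with the two ``size'' conditions swapping roles and the two ``$\alpha$'' conditions swapping roles.
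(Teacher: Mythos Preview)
Your proposal is correct and is essentially identical to the paper's own proof: the paper simply lists the four membership conditions for $2^s-k+1$ and notes that each reduces to one of the hypotheses on $k$, exactly via the involution identities you wrote down.
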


\begin{proof}
It is easy to see that
\begin{itemize}
\item[(1)] $2^s-k+1\leq n$, 
\item[(2)] $2^s-(2^s-k+1)+1=k\leq n$,
\item[(3)] $\alpha((2^s-k+1)-1)=\alpha(2^s-k)\geq 2$, 
\item[(4)] $\alpha(2^s-(2^s-k+1))=\alpha(k-1)\geq 2$. \qedhere
\end{itemize}
\end{proof}

\begin{proposition}\label{sec23}
Suppose that $D$ is not empty and $k \in D$.
Then\/{\rm:}
\begin{itemize}
\item[{\rm (1)}]
$2^{s-t+1} (k-1)+1 >n$.
\item[{\rm (2)}] 
If $\varepsilon=0$, then $2^{s-t}(k-1)+1>n$.
\end{itemize}
\end{proposition}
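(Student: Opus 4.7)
The plan is to reduce both parts to the elementary observation that any $k \in D$ satisfies $k - 1 \geq 2^s - n$, which is immediate from the defining condition $2^s - k + 1 \leq n$ of $D$.

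Part~(2) is actually cleaner via \fullref{sec22}: since $D$ is nonempty and $k \in D$, that proposition gives $2^s - k + 1 \in D$, so by maximality of $m$ we have $2^s - k + 1 \leq m$, i.e.\ $k - 1 \geq 2^s - m$. Multiplying by $2^{s-t}$ and adding $1$ yields
\[
2^{s-t}(k-1) + 1 \;\geq\; 2^{s-t}(2^s - m) + 1 \;=\; m',
\]
and the hypothesis $\varepsilon = 0$ is by definition $m' > n$, which finishes (2).

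For part~(1), I would split according to the two defining regimes of $t$. If $n < 2^s - 2$, the defining inequality $n < 2^s - 2^{t-1} - 1$ gives $2^s - n \geq 2^{t-1} + 2$, so combining with $k - 1 \geq 2^s - n$ one obtains
\[
2^{s-t+1}(k-1) \;\geq\; 2^{s-t+1}(2^{t-1} + 2) \;=\; 2^s + 2^{s-t+2} \;>\; 2^s \;\geq\; n.
\]
If instead $n \in \{2^s-2,\, 2^s-1,\, 2^s\}$, then $t = 1$ by definition, and the condition $\alpha(k-1) \geq 2$ forces $k - 1 \geq 3$, so $2^{s-t+1}(k-1) = 2^s(k-1) \geq 3\cdot 2^s > n$. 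Either way, $2^{s-t+1}(k-1) + 1 > n$.

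The only real subtlety is the case split in (1): the generic-case inequality $n < 2^s - 2^{t-1} - 1$ is exactly what is needed to dominate $n$ by $2^{s-t+1}(k-1)$, while the boundary case $n \in \{2^s-2, 2^s-1, 2^s\}$ must be dispatched by the minimal allowed value $k - 1 = 3$ coming from $\alpha(k-1) \geq 2$. No use of \fullref{sec22} is required for (1), since the lower bound $k - 1 \geq 2^s - n$ alone suffices.
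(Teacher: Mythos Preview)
Your proof is correct and follows essentially the same route as the paper's. For (2) you reproduce the paper's argument verbatim. For (1) you are in fact slightly more careful: the paper writes the single chain $2^{s-t+1}(k-1)+1 > 2^s + 2^{s-t+1} + 1 > n$, which tacitly uses $k-1 > 2^{t-1}+1$; this follows from $k-1 \geq 2^s - n$ via the defining bound $2^s - n > 2^{t-1}+1$ only when $n < 2^s - 2$, whereas for $n \in \{2^s-2,\,2^s-1,\,2^s\}$ (where $t=1$) one must instead invoke $\alpha(k-1)\geq 2$ to get $k-1\geq 3$, exactly as you do. Your observation that \fullref{sec22} is not needed for (1) is also correct, since $2^s - k + 1 \leq n$ is already part of the definition of $D$.
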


\begin{proof}
First, we prove (1).
Since $2^s-k+1$ is also in $D$, we have
\[
2^s-n\leq k-1.
\]
Hence,  we have
\[
2^{s-t+1}(k-1)+1>2^s+2^{s-t+1}+1>n. 
\]
Next, we prove (2).
Since $2^s-k+1$ is also in $D$, by the definition of $m$, we have
\[
2^s-k+1\leq m.
\]
Thus, we have 
\[
2^s-m\leq k-1.
\]
Since $\varepsilon=0$, we have
\[
2^{s-t}(k-1)+1\geq 2^{s-t}(2^s-m)+1=m'>n. \qedhere
\]
\end{proof}

It is clear that the number of integers in $E$ is $n-s-1$.
For $k=0, \ldots, s-t-1$, we define $\sigma(k)$
by
$$\sigma(k)=2^s-2^{s-1-k}-1.$$
\begin{equation*}
C_0=\{ \sigma(k)\;|\; k=0, \ldots, s-t-1\}.\tag*{\rm Let}
\end{equation*}
Then, it is easy to see that $C_0$ is a subset of $E$.
For $k=s-t$, we define $\sigma(k)$ to be $m$ if $\varepsilon=1$.
For $k=s-t+\varepsilon, \ldots, n-s-2$, we define 
$\sigma(k)$ as follows:
\[
\sigma(k) \in \{ a \in E\;|\; a\not \in C_0, a \not = m \;\mbox{if $\varepsilon=1$} \},
\]
and then we have
\[
\sigma(s-t+\varepsilon)<\cdots < \sigma(n-s-2). \]
Let $\tau(k)=2^{s-1}+2^k+1$ for $k=0,\ldots, s-t-1$.
Let $C=C_0\cup C_1$, where
\[
C_1=\{ \;\tau(k) \; | \;k=0,\ldots, s-t-1\;\}.
\]
What we need in the proof of \fullref{sec52} in \fullref{sec5} is the following 
Propositions~\ref{sec24} and \ref{sec25}.
For the rest of this section, we assume that $n\geq 18$, $n \not = 2^{s-1}+1$
and
 $s\geq 5$.
 

\begin{proposition}\label{sec24}
Suppose that $n \geq 18$ and $n \not = 2^{s-1}+1$. Then, 
the integers $\sigma(k)$, $\tau(k)$ $(k=0,\ldots, s-t-1)$ are distinct from each other.
\end{proposition}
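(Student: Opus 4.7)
The plan is to establish three distinctness statements: injectivity of $\sigma$ on $\{0,\ldots,s-t-1\}$, injectivity of $\tau$ on the same index set, and disjointness of their images. Injectivity of $\sigma$ and $\tau$ is immediate from the defining formulas $\sigma(k)=2^s-2^{s-1-k}-1$ and $\tau(k)=2^{s-1}+2^k+1$, both of which are strictly increasing functions of $k$.

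The real content is the disjointness of images, which I would attack by comparing the binary digit sums $\alpha(\sigma(k_1))$ and $\alpha(\tau(k_2))$. Two preliminary observations: the hypothesis $n\geq 18$ together with $n\leq 2^s$ forces $s\geq 5$, and since $t\geq 1$ we have $k\leq s-t-1\leq s-2$, so $1\leq s-1-k\leq s-1$ for every relevant $k$.

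For $\sigma(k_1)$, rewrite $2^s-1=\sum_{j=0}^{s-1}2^j$ and subtract the single extra term $2^{s-1-k_1}$; since $s-1-k_1$ is one of the exponents $0,1,\ldots,s-1$, this exhibits $\sigma(k_1)$ as a sum of exactly $s-1$ distinct powers of $2$, so $\alpha(\sigma(k_1))=s-1\geq 4$. For $\tau(k_2)=2^{s-1}+2^{k_2}+1$, the exponents $s-1$, $k_2$, $0$ are all distinct when $k_2\geq 2$ (using $k_2\leq s-2<s-1$), giving $\alpha(\tau(k_2))=3$; direct inspection of the remaining cases gives $\alpha(\tau(0))=\alpha(2^{s-1}+2)=2$ and $\alpha(\tau(1))=\alpha(2^{s-1}+3)=3$. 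Hence $\alpha(\tau(k_2))\leq 3<4\leq\alpha(\sigma(k_1))$, so $\sigma(k_1)\neq\tau(k_2)$.

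No genuine obstacle is expected here; the only point requiring attention is bookkeeping on the index range, so that the bit-count identity $\alpha(\sigma(k))=s-1$ is valid (we need $0\leq s-1-k\leq s-1$, guaranteed by $t\geq 1$) and so that the resulting inequality $s-1\geq 4$ does separate the two counts (guaranteed by $s\geq 5$, which follows from $n\geq 18$). The exclusion $n\neq 2^{s-1}+1$ does not enter the argument directly but is consistent with $D\neq\emptyset$ being in force throughout this section.
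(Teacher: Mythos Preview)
Your proof is correct and follows essentially the same approach as the paper: both arguments observe that $s\geq 5$ from $n\geq 18$, note that $\sigma$ and $\tau$ are separately injective, and then separate the images by comparing the binary digit sums $\alpha(\sigma(k))=s-1\geq 4$ versus $\alpha(\tau(k))\leq 3$. Your write-up is somewhat more explicit about the index-range bookkeeping, but there is no substantive difference.
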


\begin{proof}
If $n \geq 18$, then $s\geq 5$, so that $s-1> 3$.
Since $(s-t)$ integers $\sigma(k)$ ($k=0,\ldots, s-t-1$)  
in $C_0$ are distinct from each other, 
since $(s-t)$ integers $\tau(k)$ ($k=0,\ldots, s-t-1$) 
in $C_1$ are also distinct from each other, and since $\alpha(\sigma(k))=s-1$, $\alpha(\tau(k))\leq 3$,
we have that $C_0\cap C_1=\emptyset$ and that $(2s-2t)$ integers $\sigma(k)$, $\tau(k')$ are distinct from each other where $k, k'\in\{0,\ldots, s-t-1\}$.
\end{proof}

\begin{proposition}\label{sec25}
Suppose that $n \geq 18$ and $n \not = 2^{s-1}+1$. If $\varepsilon=1$, then $m$, $m' \not \in C$.
\end{proposition}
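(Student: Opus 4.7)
The plan is to rule out each of the four possible equalities $m = \sigma(k)$, $m = \tau(k)$, $m' = \sigma(k)$, $m' = \tau(k)$ for $k \in \{0,\ldots,s-t-1\}$ separately. Three general tools will be used throughout: the range restriction $0 \leq k \leq s-t-1$ (in particular $k \leq s-2$, since $t\geq 1$, and $t \leq s-1$); the size bound $m' \leq n \leq 2^s$ coming from the assumption $\varepsilon = 1$; and the factorization $m'-1 = 2^{s-t}(2^s-m)$, which forces the binary expansion of $m'-1$ to end in at least $s-t$ zeros.

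If $m = \sigma(k) = 2^s - 2^{s-1-k} - 1$, then direct computation gives $m' = 2^{2s-1-k-t} + 2^{s-t} + 1$; since $k \leq s-t-1$, already $2^{2s-1-k-t} \geq 2^s$, so $m' > 2^s \geq n$, contradicting $\varepsilon = 1$. If $m = \tau(k) = 2^{s-1} + 2^k + 1$, then $m' \leq 2^s$ reduces to $2^{s-1} \leq 2^t + 2^k$; the bounds $k \leq s-t-1$ and $t \leq s-1$, together with $s \geq 5$, should force the extremal case $t = s-1$, $k = 0$, for which $m' = 2^s - 3$. The defining inequality $n < 3\cdot 2^{s-2} - 1$ for $t = s-1$ is then incompatible with $n \geq 2^s - 3$ whenever $s \geq 3$.

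For $m' = \sigma(k)$ the trailing-zero count is decisive: the binary expansion of $\sigma(k) - 1 = 2^s - 2^{s-1-k} - 2$ ends in exactly one $0$ when $k \leq s-3$ and in exactly two $0$$'$s when $k = s-2$. Matching with the lower bound of $s-t$ trailing zeros for $m'-1$ and with the range of $k$ reduces to the single alternative $(t,k) = (s-1,0)$, which forces $m = 3\cdot 2^{s-2} + 1$; but $t = s-1$ requires $n < 3\cdot 2^{s-2} - 1$, contradicting $m \leq n$. For $m' = \tau(k) = 2^{s-1} + 2^k + 1$, the binary expansion of $m'-1 = 2^{s-1} + 2^k$ ends in exactly $k$ zeros (because $k \leq s-2 < s-1$), forcing $s - t \leq k$, which is incompatible with $k \leq s-t-1$.

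The step requiring real attention, rather than a one-line estimate, is $m = \tau(k)$: the size bound alone narrows the picture only to $t = s-1$, $k = 0$, and to close the argument one must invoke the precise defining interval for $t = s-1$. All the remaining cases reduce to routine $2$--adic valuation and inequality bookkeeping based on the elementary structure of $\sigma(k)$ and $\tau(k)$.
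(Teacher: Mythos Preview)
Your proof is correct. The overall structure---ruling out the four possibilities $m=\sigma(k)$, $m=\tau(k)$, $m'=\sigma(k)$, $m'=\tau(k)$ separately---is exactly what the paper does. The difference lies in the tools: the paper first invokes Proposition~2.6 to obtain $m=n$ and $2^{t-1}+1<2^s-n\le 2^t+1$, and then in each case rewrites the equation as a constraint on $n$ and appeals to the defining interval of $t$ (or to integrality of $2^s-n$). You instead work directly with the inequalities $m'\le n$, $m\le n$, $n\le 2^s$ and the $2$--adic valuation bound $v_2(m'-1)\ge s-t$ coming from $m'-1=2^{s-t}(2^s-m)$, invoking the interval for $t$ only at the end of the cases $m=\tau(k)$ and $m'=\sigma(k)$. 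Your route is a bit more self-contained (it bypasses Proposition~2.6) and the valuation argument disposes of the two $m'$ cases very cleanly; the paper's route, via $m=n$, has the advantage that all four equations become constraints on $n$ alone, so the size and integrality arguments are more uniform. The hedging ``should force'' in the case $m=\tau(k)$ is unnecessary: if $t\le s-2$ then also $k\le s-t-1\le s-2$, so $2^t+2^k\le 2^{s-1}$ with equality only when $t=k=s-2$, which contradicts $k\le s-t-1$ for $s\ge 5$.
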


The rest of this section is devoted to proving \fullref{sec25} above.
Firstly, we prove that if $n \geq 18$ and if $n \in C$, then we have $\varepsilon=0$.

\begin{proposition} \label{sec26}
Suppose that $n \geq 18$ and $n\not = 2^{s-1}+1$. If $\varepsilon=1$, then we have $m=n$
and $2^{t-1}+1<2^s-n\leq 2^t+1$.
\end{proposition}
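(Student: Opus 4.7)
The plan is to prove the two assertions by first reducing to the generic range $n \leq 2^s - 3$, where the inequality on $2^s - n$ is built into the definition of $t$, and then by showing $n \in D$ under the hypothesis $\varepsilon = 1$. The main obstacle will be the subcase in which $2^s - n$ is a power of $2$, which has to be ruled out by a direct computation of $m$ and $m'$.

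First I would rule out the boundary values $n \in \{2^s - 2, 2^s - 1, 2^s\}$. In each such case $t = 1$ by definition, and any $k \in D$ satisfies $\alpha(2^s - k) \geq 2$, hence $2^s - k \geq 3$. This forces $m' = 2^{s-1}(2^s - m) + 1 \geq 3 \cdot 2^{s-1} + 1 > 2^s \geq n$, so $\varepsilon = 0$; therefore $\varepsilon = 1$ forces $n \leq 2^s - 3$. At that point the defining condition $2^s - 2^t - 1 \leq n < 2^s - 2^{t-1} - 1$ for $t$ rearranges directly to the second claim $2^{t-1} + 1 < 2^s - n \leq 2^t + 1$.

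To establish $m = n$, since $m$ is by definition the greatest element of $D$ not exceeding $n$, it suffices to show $n \in D$. The condition $2^s - n + 1 \leq n$ follows from $n > 2^{s-1}$, so what remains are $\alpha(n-1) \geq 2$ and $\alpha(2^s - n) \geq 2$. I would argue by contradiction. If $\alpha(n-1) < 2$, then $n - 1$ is a power of $2$, and together with $n \geq 18$ and $2^{s-1} < n \leq 2^s$ this forces $n = 2^{s-1} + 1$, which is excluded by hypothesis.

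The main obstacle is the remaining case $\alpha(2^s - n) < 2$. Here $n \leq 2^s - 3$ gives $2^s - n \geq 3$, so $2^s - n = 2^j$ with $j \geq 2$, and the inequality $2^{t-1} + 1 < 2^j \leq 2^t + 1$ combined with the parity observation that $2^j$ is even while $2^t + 1$ is odd (for $t \geq 1$) pins down $j = t$, so $n = 2^s - 2^t$. To derive a contradiction I would exhibit $n - 1 = 2^s - 2^t - 1$ as a member of $D$ by a direct binary-expansion check (which yields $\alpha((n-1)-1) = s - 2 \geq 2$ and $\alpha(2^s - (n-1)) = \alpha(2^t + 1) = 2$), so $m = n - 1$ and therefore $m' = 2^{s-t}(2^t + 1) + 1 = 2^s + 2^{s-t} + 1 > n$, forcing $\varepsilon = 0$ and contradicting the hypothesis. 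This completes the sketch.
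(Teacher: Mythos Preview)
Your proof is correct and follows essentially the same approach as the paper's: handle the boundary values $n\in\{2^s-2,2^s-1,2^s\}$ first to force $\varepsilon=0$, then in the range $n\leq 2^s-3$ read off the inequality on $2^s-n$ from the definition of $t$, and finally show that $n\notin D$ leads to $\varepsilon=0$ by ruling out $\alpha(n-1)=1$ via the hypothesis $n\neq 2^{s-1}+1$ and, in the case $\alpha(2^s-n)=1$, pinning down $2^s-n=2^t$, identifying $m=2^s-2^t-1$, and computing $m'=2^s+2^{s-t}+1>n$. The only cosmetic differences are that the paper phrases the argument as a contrapositive (``if $m\neq n$ then $\varepsilon=0$'') and in the boundary case computes $m=2^s-3$ explicitly rather than using your general bound $2^s-m\geq 3$; your version supplies slightly more justification (e.g.\ verifying $n-1\in D$) where the paper simply asserts the value of $m$.
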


\begin{proof}
We prove this proposition by showing that if $m \not = n$, then we have $\varepsilon=0$.
First, we deal with the case $n=2^s$, $2^s-1$ or $2^s-2$. In this case, 
$t=1$, $m=2^s-3$, $m'=2^{s-1}\cdot 3+1>2^s+1> n$. Thus, we have $\varepsilon=0$.
So, without loss of generality, we may assume that $2^{s-1}+2\leq n \leq 2^{s}-3$ and
so we have
\[
2^{t-1}+1<2^s-n\leq 2^t+1.
\]
Suppose that $m \not = n$. Then, $\alpha(n-1)= 1$ or $\alpha(2^s-n)  = 1$.
The equality $\alpha(n-1)=1$ holds  if and only if $n=2^{s-1}+1$. Hence, $\alpha(2^s-n)=1$.
So we have $2^s-n=2^t$, $m=2^s-2^t-1$ and 
\[
m'=2^{s-t} (2^{t}+1)+1=2^s+2^{s-t}+1>n.
\]
Hence, by definition, we have $\varepsilon=0$.
\end{proof}

\begin{proof}[Proof of \fullref{sec25}]
By \fullref{sec26}, we have $m=n$, 
\begin{gather*}
m'=2^{s-t}(2^s-n)+1\\
2^{t-1}+1< 2^s-n \leq 2^t+1.\tag*{\rm and}
\end{gather*}
If $m \in C$ or if $m'\in C$, then one of the following conditions holds:
\begin{itemize}
\item[{\rm (1)}] $n=2^s-2^{s-1-k}-1$, 
\item[{\rm (2)}] $n=2^{s-1}+2^k+1$,
\item[{\rm (3)}] $2^{s-t}(2^s-n)+1=2^s-2^{s-1-k}-1$, 
\item[{\rm (4)}] $2^{s-t}(2^s-n)+1=2^{s-1}+2^k+1$,
\end{itemize}
where $0\leq k \leq s-t-1$. We prove that it is not the case.

\noindent {\rm Case (1)}\qua We have $2^s-n=2^{s-1-k}+1$. So, we have $t=s-1-k$ and
\[
m'-n=2^{s-t}(2^{t}+1)+1-(2^{s}-2^{s-1-k}-1)>0.
\]
This contradicts the assumption $\varepsilon=1$.

\noindent {\rm Case (2)}\qua We have $2^s-n=2^{s-1}-2^{k}-1$. 
So, one of the following statements holds:
\begin{itemize}
\item[(a)] $t=s-1$, $k<s-2$ or 
\item[(b)] $t=s-2$, $k=s-2$.
\end{itemize}
If $s-t=1$ and $k<s-2$, then 
$m'=2^s-2^{k+1}-1$
and
\[
m'-n=2^{s-1}-2^{k+1}-2^k-2.
\]
If $s-t=2$ and  $k=s-2$, then we have  
\[
m'-n=2^{s-1}-2^k-2.
\]
In both cases, we have $m'-n>0$. This contradicts the assumption $\varepsilon=1$.

\noindent {\rm Case (3)}\qua
We have 
\[
2^s-n = 2^t-2^{(s-1-k)-(s-t)} -2^{1-(s-t)}.
\]
By the definition of $t$, we have that $s-t>0$. Moreover, because of the assumption $k\leq s-t-1$, we have $s-1-k>0$.
Since $2^s-n$ is  an integer, we have $s-t=1$ and $k=0$.  So, we have $2^s-n=2^{s-2}-1$. This contradicts 
the inequality \[
2^{t-1}+1<2^s-n.
\]

\noindent {\rm Case (4)}\qua
We have
\[
2^s-n=2^{t-1}+2^{k-(s-t)}.
\]
Since $2^s-n$ is an integer, we have $k-(s-t)\geq 0$. This contradicts the assumption $0\leq k \leq s-t-1$.

Thus, any of the above four conditions (1), \ldots, (4) does not hold. Hence, we have the desired result.
\end{proof}

\section{\groebner bases and regular sequences}\label{sec3}

In this section, we recall the notion of \groebner bases and regular
sequences.  Let $K$ be a field and let $R=K[x_1, \ldots, x_n]$ be a
polynomial ring over $K$ in $n$ variables $x_1, \ldots, x_n$.

Firstly, we recall the definition of \groebner basis and its
elementary properties.  We refer the reader to text books on \groebner
bases such as Adams and Loustaunau \cite{adams}.  We assume that $R$
has a fixed term order on the set of monomials of $R$.  A term order
is often called a monomial order in literature, see Eisenbud \cite{eisenbud}
for example. It is a total order on the set of monomials such that
for monomials $x$, $y$, $z$:
\[
z<xz<yz
\]
if $x<y$ and $z\not=1$.
Let $f$ be an element in $R$. We denote by $\lp(f)$ the leading power, or the leading monomial, 
of $f$ and
by $\lt(f)$ the leading term of $f$. In the case the coefficient field $K$ is $\bF_2$, the leading term and the leading monomial are the same.
Let $G=\{ g_1, \ldots, g_r\}$ be a finite subset of $R$, where we assume that $g_i$'s are nonzero
and $g_i\not = g_j$ for $i\not=j$.

The subset $G$ is called a \groebner basis if 
each polynomial in the ideal $I=(g_1, \ldots, g_r)$ has the leading term divisible 
by the leading term of $g_k$ for some $g_k\in G$.
A polynomial $f$ is said to reduce to zero modulo $G$ 
if and only if there exist $f_1, \ldots, f_s\in R$ and $i_1, \ldots, i_s\in \{ 1,\ldots, r\}$ such
that
\[
f=\sum_{k=1}^{s}  f_kg_{i_k},
 \]
 where
a scalar multiple of $\lp(f_1)\lp(g_{i_1})$ is a nonzero term in $f$,
and for $k=2, \ldots, s$, 
a scalar multiple of $\lp(f_k)\lp(g_{i_k})$ is a nonzero term of
\[
\lp(f-\sum_{\ell=1}^{k-1} f_{\ell}g_{i_\ell}).
\]
It is clear from the definition of \groebner basis that 
when $G=\{g_1, \ldots, g_r\}$ is a \groebner basis, 
a polynomial in $R$ is in the ideal $(g_1, \ldots, g_r)$ 
if and only if $f$ reduces to zero modulo $G$.

The following theorem is known as the Buchberger criterion.
\begin{theorem}[Buchberger]
Let $G=\{g_1, \ldots, g_r\}$ be a finite subset of $R$.
Let \[
S(g_i, g_j)=\frac{\lcm(\lp(g_i),\lp(g_j))}{\lt(g_i)}g_i-\frac{\lcm(\lp(g_i),\lp(g_j))}{\lt(g_j)}g_j,
\]
where $\lcm$ stands for the least common multiple.
The set $G$ is a \groebner basis if  and only if
all $S(g_i, g_j)$ ($i\not = j$)  reduce to zero modulo $G$.
\end{theorem}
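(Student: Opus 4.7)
The plan is to prove both directions of the equivalence separately. The forward direction is essentially immediate: if $G$ is a \groebner basis, then each $S(g_i,g_j)$ lies in the ideal $I=(g_1,\ldots,g_r)$, so by the defining property of a \groebner basis its leading term is divisible by $\lt(g_k)$ for some $g_k\in G$. Subtracting an appropriate scalar multiple of a monomial times $g_k$ kills this leading term and strictly lowers the leading power; iterating this process terminates because the term order is a well-ordering on monomials, and the resulting identity exhibits $S(g_i,g_j)$ as reducing to zero modulo $G$.

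For the reverse direction, I would assume that every $S(g_i,g_j)$ reduces to zero modulo $G$ and verify that an arbitrary nonzero $f\in I$ has its leading term divisible by $\lt(g_k)$ for some $k$. Fix a representation $f=\sum_{k=1}^{s} h_k g_{i_k}$ with each $h_k$ nonzero, and among all such representations choose one for which the monomial
\[
\delta=\max_{1\leq k\leq s}\lp(h_k)\lp(g_{i_k})
\]
is smallest in the term order; such a minimum exists because the term order is a well-ordering. If $\delta=\lp(f)$, then $\lp(f)=\lp(h_k)\lp(g_{i_k})$ for some $k$, so $\lp(g_{i_k})$ divides $\lp(f)$ and the \groebner basis condition holds at $f$.

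The crux is to rule out the case $\delta>\lp(f)$ by producing a representation with strictly smaller $\delta$, contradicting minimality. In this case the terms of leading monomial $\delta$ must cancel, so if $J\subset\{1,\ldots,s\}$ indexes those $k$ with $\lp(h_k)\lp(g_{i_k})=\delta$ and $c_k$ denotes the leading coefficient of $h_k g_{i_k}$, then $\sum_{k\in J} c_k=0$. I would then rewrite the sum $\sum_{k\in J} c_k\,\lp(h_k)\,g_{i_k}$ as a telescoping combination of expressions of the form $\mu_{k,\ell}\cdot S(g_{i_k},g_{i_\ell})$, where $\mu_{k,\ell}=\delta/\lcm(\lp(g_{i_k}),\lp(g_{i_\ell}))$; this is possible precisely because each $\lp(h_k)\lp(g_{i_k})$ equals $\delta$. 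The hypothesis supplies, for each pair $(k,\ell)$, a representation of $S(g_{i_k},g_{i_\ell})$ whose summands have leading power strictly less than $\lcm(\lp(g_{i_k}),\lp(g_{i_\ell}))$, and multiplying by $\mu_{k,\ell}$ yields an expansion of $f$ in which every summand has leading power strictly below $\delta$.

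The main obstacle is the careful bookkeeping in this last step: one must verify that the leading-power bounds from the reductions of the $S$-polynomials survive multiplication by the monomial factor $\mu_{k,\ell}$, and that when the pairwise replacements are assembled back into a single expansion of $f$, no new summand of leading power $\delta$ is introduced. This relies on the definition of reduction modulo $G$, which forces each intermediate leading power strictly below the previous maximum, and on the standard observation that the cancellation of two leading terms sharing the monomial $\delta$ is captured exactly by the corresponding $S$-polynomial. Once this bookkeeping is in place, the contradiction with the minimality of $\delta$ completes the proof.
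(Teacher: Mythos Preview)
Your outline is the standard proof of Buchberger's criterion and is essentially correct; the paper itself does not give an independent argument but simply refers the reader to Theorem~1.7.4 of Adams and Loustaunau, where precisely this argument is carried out in detail. So your approach and the paper's coincide, with the only difference being that the paper outsources the bookkeeping you flag (the telescoping rewrite via $S$-polynomials and the control of leading powers under multiplication by the monomial $\mu_{k,\ell}$) to the cited reference rather than spelling it out.
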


\begin{proof}
See the proof of Theorem 1.7.4 in \cite{adams}.
\end{proof}

We also recall the lemma below. 
\begin{lemma}
Let $g_1$,$g_2\in R$ and suppose that both are nonzero. Let $d=\gcd(g_1, g_2)$.
The following statements are equivalent\/{\rm :}
\begin{itemize}
\item[{\rm (1)}] $\displaystyle \lp(\frac{g_1}{d})$ and
 $\displaystyle \lp(\frac{g_2}{d})$ are relatively prime\/{\rm;}
\item[{\rm (2)}] $S(g_1, g_2)$ reduces to zero modulo $\{g_1, g_2\}$.
\end{itemize}
\end{lemma}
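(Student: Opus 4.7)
The plan is to prove this classical Gröbner basis fact by establishing each implication separately, mimicking the standard ``coprime leading terms'' criterion (see Adams--Loustaunau, Proposition 3.3.6) but with the refinement that comes from first factoring out the polynomial gcd.

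For (1) $\Rightarrow$ (2): factor $g_1 = d h_1$ and $g_2 = d h_2$, so that $\lp(g_i) = \lp(d)\lp(h_i)$ and, by hypothesis, $\gcd(\lp(h_1),\lp(h_2)) = 1$. Setting $L = \lcm(\lp(g_1),\lp(g_2))$ and writing $c_i$ for the leading coefficient of $g_i$, one computes $L = \lp(d)\lp(h_1)\lp(h_2)$ and
\[
S(g_1,g_2) \;=\; \frac{\lp(h_2)}{c_1}\, g_1 \;-\; \frac{\lp(h_1)}{c_2}\, g_2.
\]
The essential ingredient is the tautological syzygy $h_2 g_1 - h_1 g_2 = 0$ (both sides equal $d h_1 h_2$). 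Writing $h_i = c_{h_i}\lp(h_i) + h_i'$ with $\lp(h_i') < \lp(h_i)$ and using the syzygy to absorb the top terms, $S(g_1,g_2)$ can be rewritten as a combination $f_1 g_1 + f_2 g_2$ in which $\lp(f_k) < \lp(h_{3-k})$. Each monomial $\lp(f_k)\lp(g_{i_k})$ then lies strictly below $L$, and a leading-term book\-keeping check (ordering the two summands by their leading monomials) confirms that this expression satisfies the requirements in the definition of ``reduces to zero''; in characteristic $2$, which is the case used in the rest of the paper, the syzygy is symmetric and the bookkeeping becomes transparent.

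For (2) $\Rightarrow$ (1): I would argue by contrapositive. If $\lp(h_1)$ and $\lp(h_2)$ are not coprime, then $L = \lcm(\lp(g_1),\lp(g_2))$ is strictly less than $\lp(d)\lp(h_1)\lp(h_2)$, and in particular $\lp(S(g_1,g_2))$ is strictly smaller than the product $\lp(h_1)\lp(h_2)\lp(d)$. Since reduction to zero modulo $\{g_1,g_2\}$ forces the leading monomial of $S$ to be divisible by $\lp(g_1)$ or $\lp(g_2)$ at the first reduction step, and subsequent steps can only lower leading monomials further, tracing the arithmetic of the shared variable in $\lp(h_1)$ and $\lp(h_2)$ produces a monomial in the residual that no remaining reduction can eliminate, contradicting (2).

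The main obstacle I anticipate is the precise leading-term bookkeeping in the definition of ``reduces to zero''. One must exhibit an explicit ordering of the summands $f_1 g_1$, $f_2 g_2$ and verify that $\lp(f_k)\lp(g_{i_k})$ matches the leading monomial of the correct residual at each step; a single rewriting via the syzygy should suffice, but it may be cleaner to set up an induction on $\lp(S)$ within the monoid of monomials and cite the identification of leading terms in $g_i = dh_i$ directly.
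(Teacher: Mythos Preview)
The paper does not give its own proof of this lemma: its entire argument is the one line ``See the proof of Lemma~3.3.1 in \cite{adams}.'' So there is no in-paper proof to compare your proposal against beyond that citation; only the forward direction $(1)\Rightarrow(2)$ is ever used (for Proposition~\ref{sec33}), and that is precisely the standard coprime-leading-terms criterion you mention.

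Your sketch of $(1)\Rightarrow(2)$ is the standard syzygy argument and is sound as a plan: the identity $h_2 g_1 - h_1 g_2 = 0$ lets you rewrite $S(g_1,g_2)$ as a combination whose cofactors have strictly smaller leading monomials than $h_2,h_1$. The only work left is the ``bookkeeping'' you flag, namely matching the paper's specific definition of ``reduces to zero'' (each $\lp(f_k)\lp(g_{i_k})$ must equal the leading monomial of the running residual), which is routine once you order the summands correctly; you should state that argument rather than merely assert it.

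Your sketch of $(2)\Rightarrow(1)$ is not a proof. The sentence ``tracing the arithmetic of the shared variable in $\lp(h_1)$ and $\lp(h_2)$ produces a monomial in the residual that no remaining reduction can eliminate'' is a hope, not a mechanism: you have not identified which monomial survives, nor argued why neither $\lp(g_1)$ nor $\lp(g_2)$ can divide it at some later step. The observation that $L<\lp(d)\lp(h_1)\lp(h_2)$ when the $\lp(h_i)$ share a factor is correct but does not by itself obstruct reduction, since $\lp(S)$ could still be divisible by one of the $\lp(g_i)$. If you intend to supply a proof rather than a citation, this direction needs an actual construction (or, as the paper does, simply refer to Adams--Loustaunau).
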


\begin{proof}
See the proof of Lemma 3.3.1 in \cite{adams}.
\end{proof}

As an application of this lemma, by the Buchberger criterion, we have the following proposition.
\begin{proposition}\label{sec33}
Let $G=\{ g_1, \ldots, g_r\}$ be a finite set of polynomials in $R$. Suppose that 
the leading terms of $g_i$ and $g_j$ are relatively prime for $i\not = j$. Then, 
the set $G$ is a \groebner basis.
\end{proposition}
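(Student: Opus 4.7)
The plan is to reduce Proposition 3.3 to the preceding Buchberger criterion together with the lemma on $S$-polynomials. By Buchberger's criterion, it suffices to show that for every pair $i\neq j$, the $S$-polynomial $S(g_i,g_j)$ reduces to zero modulo $G$. Since the lemma gives such a reduction (even modulo the two-element set $\{g_i,g_j\}$) under the hypothesis that $\lp(g_i/d)$ and $\lp(g_j/d)$ are relatively prime, where $d=\gcd(g_i,g_j)$, the proof becomes a matter of deducing this coprimality from the given hypothesis that $\lp(g_i)$ and $\lp(g_j)$ are themselves relatively prime.

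The key step is the observation that coprimality of the leading monomials forces $d$ to be a nonzero scalar. Indeed, since $d\mid g_i$ in $R$, one has $\lp(d)\mid \lp(g_i)$ by multiplicativity of the leading power with respect to products, and likewise $\lp(d)\mid \lp(g_j)$. The hypothesis then gives $\lp(d)=1$, so $d\in K^{\times}$. Consequently $g_i/d$ and $g_j/d$ are scalar multiples of $g_i$ and $g_j$, their leading powers agree with $\lp(g_i)$ and $\lp(g_j)$, and so are relatively prime. Applying the lemma yields that $S(g_i,g_j)$ reduces to zero modulo $\{g_i,g_j\}\subseteq G$, hence modulo $G$.

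There is no substantive obstacle; the only mild subtlety is justifying the implication $\gcd(\lp(g_i),\lp(g_j))=1\Rightarrow d\in K^{\times}$, which is handled by the multiplicativity of $\lp$ noted above. Assembling the two ingredients gives the Gröbner basis conclusion immediately, with no additional case analysis needed.
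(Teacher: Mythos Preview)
Your proof is correct and follows the same approach as the paper, which simply states that the proposition is an application of the preceding lemma together with the Buchberger criterion. Your write-up supplies the one detail the paper leaves implicit, namely that coprimality of $\lp(g_i)$ and $\lp(g_j)$ forces $d=\gcd(g_i,g_j)$ to be a unit so that condition~(1) of the lemma is satisfied.
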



Secondly, we recall the definition of a regular sequence.
A sequence $g_1, \ldots, g_r$ of polynomials in $R$ is called a regular sequence if 
the multiplication by $g_{k}$ induces a  monomorphism 
\[
R \stackrel{\times g_1}{\longrightarrow} R
\]
for $k=1$ and a monomorphism
\[
R/(g_1, \ldots, g_{k-1}) \stackrel{\times g_k}{\longrightarrow}  R/(g_1, \ldots, g_{k-1})
\]
for $k=2, \ldots, r$.
If $g_1, \ldots, g_r$ are homogeneous polynomials, 
then the \poincare series of $R/(g_1, \ldots, g_r)$ is given by
\[
\prod_{k=1}^r {\left(1-t^{\deg g_k}\right)} \left/ 
\prod_{k=1}^n {\left(1-t^{\deg x_k}\right)} \right..
\]
We need the following lemma in the proof of \fullref{sec52} in \fullref{sec5}.

\begin{lemma}\label{sec34}
Suppose that $g_1,\ldots, g_r$ are polynomials in $R$
such that the leading monomials of $g_i$ and $g_j$ are relatively prime for $i\not = j$.
Then,  the sequence $g_1, \ldots, g_r$ is a regular sequence.
\end{lemma}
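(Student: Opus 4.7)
The plan is to prove the lemma by induction on $k$, showing at each step that $g_k$ is a non-zero-divisor modulo $(g_1,\ldots,g_{k-1})$. The base case $k=1$ is immediate since $R$ is an integral domain and $g_1\neq 0$ (having a leading monomial). For the inductive step, fix $k\geq 2$ and suppose $g_k f \in I_{k-1} := (g_1,\ldots,g_{k-1})$ for some $f \in R$; we must show $f \in I_{k-1}$.

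The key observation is that $\{g_1,\ldots,g_{k-1}\}$ is itself a \groebner basis of $I_{k-1}$, since the pairwise coprimality of leading monomials is inherited from the hypothesis and Proposition~\ref{sec33} applies. Divide $f$ by this \groebner basis to obtain a standard expression
\[
f = \sum_{i=1}^{k-1} q_i g_i + r,
\]
where $r$ is a remainder no monomial of which is divisible by any $\lp(g_i)$ for $i < k$. Multiplying by $g_k$ yields $g_k r = g_k f - \sum_i g_k q_i g_i \in I_{k-1}$.

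The goal is then to show $r = 0$. Suppose for contradiction that $r \neq 0$. Since $g_k r \in I_{k-1}$ and $\{g_1,\ldots,g_{k-1}\}$ is a \groebner basis, the definition forces $\lp(g_k r) = \lp(g_k)\,\lp(r)$ to be divisible by $\lp(g_j)$ for some $j < k$. But by hypothesis $\gcd(\lp(g_k), \lp(g_j)) = 1$, so $\lp(g_j)$ must divide $\lp(r)$, contradicting the fact that no monomial of the remainder $r$ is divisible by $\lp(g_j)$. Hence $r = 0$ and $f \in I_{k-1}$, completing the induction.

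There is no real obstacle here; the argument is a direct application of the \groebner basis machinery recalled in the previous paragraphs. The only small point to verify is that the hypothesis of pairwise coprimality is stable under passage to subsets, which is obvious, and that $R$ being an integral domain handles the base case. The essential ingredients are the characterization of ideal membership via reduction to zero modulo a \groebner basis and the standard fact that $\lp(fg) = \lp(f)\lp(g)$ in a polynomial ring over a field.
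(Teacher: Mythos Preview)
Your proof is correct and is essentially the same argument as the paper's. The only cosmetic difference is that where the paper writes ``without loss of generality we may assume the leading term of $f$ is not divisible by any $\lp(g_i)$,'' you spell this out explicitly via the division algorithm, replacing $f$ by its remainder $r$; the contradiction via coprimality of leading monomials is then identical.
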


\begin{proof}
Since $R$ is an integral domain, it is clear that the multiplication by $g_1$ induces a monomorphism
\[
R \to R.
\]
For $k=2, \ldots, r$, 
by \fullref{sec33}, $\{g_1, \ldots, g_{k-1}\}$ is a \groebner basis for $k=2, \ldots, r$.
Suppose that
 $f\not \in (g_1, \ldots, g_{k-1})$ and that $g_kf\in (g_1, \ldots, g_{k-1})$.
 Without loss of generality, we may assume that the leading term of $f$ is not divisible by 
$\lp(g_i)$ where $i=1, \ldots, k-1$ and that
the leading term $\lp(g_k)\lp(f)$ of $g_kf$ is divisible by some $\lp(g_i)$ where $i \in \{1, \ldots, k-1\}$.
Since $\lp(g_i)$ and $\lp(g_k)$ are relatively prime in $R$, we see that 
$\lp(f)$ is divisible by $\lp(g_i)$.
It is a contradiction. Thus, we have that if $g_kf \in (g_1, \ldots, g_{k-1})$, then 
$f\in (g_1, \ldots, g_{k-1})$.
\end{proof}

\section{Steenrod squares and the change-of-rings spectral\newline sequence}\label{sec4}

In this section, we recall some facts on the action of Steenrod squares on cotorsion products
and spectral sequences. We refer the reader to Singer's book \cite{singer}.

Firstly, we recall the action of the Steenrod squares on the cotorsion product\break $\cotor_A(\bF_2,\bF_2)$ for a connected Hopf algebra $A$ over $\bF_2$.
Let 
\[
\phi\co A\to A\otimes A
\]
 be the coproduct of $A$.
Let $\bar{A}$ be the submodule generated by the positive degree elements.
We denote by 
\[
\bar{\phi}\co \bar{A}\to \bar{A}\otimes \bar{A}
\]
the reduced coproduct.
The cotorsion product $\cotor_{A}(\bF_2,\bF_2)$  is a graded $\bF_2$--algebra
 generated by elements $ [x_1|\cdots |x_r]$ where 
we denote by $ [x_1|\cdots | x_r]$ the element represented by $x_1\otimes \cdots \otimes x_r \in  \bar{A}\otimes \cdots \otimes \bar{A}$.

\fullref{sec41} below is  a variant of Proposition~{1.111}  in Singer's book \cite{singer}. 
The unstable condition below immediately follows from the definition and the construction of Steenrod squares in \cite{singer}.
It is also called Steenrod Operation Theorem~{A1.5.2} in Ravenel \cite{ravenel}, which is a re-indexed form of 11.8 of May \cite{may}.

\begin{theorem} \label{sec41}
With the notation above, for $p\geq 0$, $k\geq 0$, there exist homomorphisms 
\[ 
\Sq^k\co \cotor_A^{p}(\bF_2,\bF_2)\to \cotor_A^{p+k} (\bF_2,\bF_2)
\]
satisfying

{\rm (1)}\qua the unstable condition\/{\rm:}
\[
\begin{array}{ll}
\Sq^0[x]=[x^2], \\
\Sq^1[x]=[x|x]=[x]^2, \\
\Sq^k[x]=0 & \mbox{for $k\geq 2$\/{\rm;} }
\end{array}
\]
{\rm (2)}\qua the Cartan formula\/{\rm:} 
\[
\Sq^k( xy)=\sum_{i+j=k, i,j \geq 0} (\Sq^i x) (\Sq^j y).
\]
\end{theorem}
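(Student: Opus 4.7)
The plan is to import these operations from Singer's cobar--level construction rather than build them from scratch. Model $\cotor_A(\bF_2,\bF_2)$ as the cohomology of the normalized cobar complex $\Omega(A)=\bigoplus_{r\geq 0}\bar{A}^{\otimes r}$, whose differential is built from the reduced coproduct $\bar{\phi}$ and whose cohomological grading is the cotorsion degree $p$. Because $A$ is a cocommutative coalgebra over $\bF_2$, the complex $\Omega(A)$ carries a natural $E_\infty$--algebra structure; its underlying multiplication is the shuffle product, and its higher coherences come from a $\Sigma_2$--equivariant chain approximation. From this $E_\infty$--structure one extracts homomorphisms
\[
\Sq^k\co \cotor_A^{p}(\bF_2,\bF_2)\to \cotor_A^{p+k}(\bF_2,\bF_2)
\]
in the standard fashion of May and Singer, and the Cartan formula (2) is then a formal consequence of the $\Sigma_2$--equivariance of the operadic squaring, exactly as in section 11 of \cite{may}.

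It remains to verify the unstable condition (1) on an element $[x]$ of cotorsion degree $1$. For such an element the operadic squaring lands in cotorsion degree at most $2$, so $\Sq^k[x]=0$ for $k\geq 2$ is forced by the bigrading alone. For the other two formulas I would read off the explicit cobar representatives of the $\Sigma_2$--equivariant diagonal: its ``top'' component sends $x$ to $x\otimes x\in \bar{A}\otimes\bar{A}$, which represents the cocycle $[x|x]$ and coincides with the algebra square $[x]^2$ in $\cotor$, giving $\Sq^1[x]=[x|x]=[x]^2$; its ``bottom'' component squares within $\bar{A}$, sending $x$ to $x^2$ and yielding $[x^2]$, which by the indexing convention used here is $\Sq^0[x]$. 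Both identifications are recorded in \cite{singer}.

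The hard part is purely a matter of conventions: the operations $\Sq^k$ are indexed here by the increment in \emph{cotorsion} degree rather than by the internal degree, so the unstable condition $\Sq^k[x]=0$ for $k\geq 2$ looks unfamiliar, and in particular $\Sq^0$ is not the identity. Once Singer's indexing is matched with ours, every assertion reduces to a direct check on cobar cocycles, and for the detailed construction we simply refer to \cite{singer}, \cite{may} and \cite{ravenel}.
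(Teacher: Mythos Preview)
Your proposal is correct and matches the paper's treatment: the paper gives no proof of this theorem at all, simply recording it as a variant of Proposition~1.111 in Singer~\cite{singer} and citing May~\cite{may} and Ravenel~\cite{ravenel} for the construction and the unstable condition. Your sketch of the cobar-level origin of the operations and the verification of (1) on $[x]$ is more detail than the paper itself supplies, but it is consistent with those references and with the paper's remark that the unstable condition ``immediately follows from the definition and the construction of Steenrod squares in \cite{singer}.''
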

Note that $\Sq^0\co \cotor_A^p (\bF_2, \bF_2)\to \cotor_A^p(\bF_2, \bF_2)$ is not the identity homomorphism.

Secondly, we recall the action of the Steenrod squares on the change-of-rings spectral sequence.
Let us consider an extension of connected Hopf algebras:
\[
\Gamma \to A \to \Lambda.
\]
Then, there exists the change-of-rings spectral sequence $$\{ E_r^{p,q}, d_r\co E_r^{p,q}\to E^{p+r,q-r+1}_r\}$$
with the $E_2$--term
\[
E_2^{p,q}=\cotor^p_\Gamma(\bF_2, \cotor^q_A(\Gamma, \bF_2)).
\]
It converges to the cotorsion product $\cotor_A(\bF_2,\bF_2)$ and
 is a first quadrant cohomology spectral sequence of graded $\bF_2$--algebras. 

The following is a combined form of Theorems~{2.15} and {2.17} in Singer's book \cite{singer}.
\begin{theorem} \label{sec42}
With the notation above,
for all $p,q\geq 0$, $r\geq 2$, there exist homomorphisms
\[
\begin{array}{ll}
\Sq^k\co E_r^{p,q}\to E_r^{p,q+k} & \mbox{if $0\leq k \leq q$,} \\
\Sq^k\co E_r^{p,q}\to E_{r+k-q}^{p+k-q,2q} & \mbox{if $q\leq k \leq q+r-2$,} \\
\Sq^k\co E_r^{p,q}\to E_{2r-2}^{p+k-q,2q} & \mbox{if $q+r-2\leq k $,}
\end{array}
\]
such that

{\rm (1)}\qua 
if $\alpha \in E_r^{p,q}$, then
both $\Sq^k \alpha$ and $\Sq^k d_r\alpha$ survive to $E_t$, where
\[
\begin{array}{ll}
t=r & \mbox{if $0\leq k \leq q-r+1$,} \\
t=2r+k-q-1 & \mbox{if $q-r+1\leq k \leq q$,}\\
t=2r-1 & \mbox{if $q\leq k$\/{\rm;}}
\end{array}
\]
{\rm(2)}\qua in $E_t$, we have
\[
d_t(\Sq^k\alpha)=\Sq^k d_r\alpha;
\]
{\rm (3)}\qua at the $E_\infty$--level, $\Sq^k$ is compatible with the action of $\Sq^k$ on $\cotor_A(\bF_2, \bF_2)$, that is,
if we denote by
\[
\pi_{p,q}\co F^p\cotor_A^{p+q}(\bF_2, \bF_2)\to E_\infty^{p,q}
\]
the edge homomorphism, then\/{\rm:}
\[
\begin{array}{ll}
\Sq^k \pi_{p,q}=\pi_{p,q+k} \Sq^k & \mbox{ for $k\leq q$ and}\\
\Sq^k \pi_{p,q}=\pi_{p+k-q,2q} \Sq^k & \mbox{ for $k\geq q$,}
\end{array}
\]
where the $\Sq^k$ in the right hand-side of the above equalities are the one given in \fullref{sec41}.
\end{theorem}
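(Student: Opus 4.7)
The statement is attributed as a combined form of Theorems 2.15 and 2.17 in Singer's book, so strictly one could cite those; my plan, however, is to indicate how the result follows from standard Steenrod-operation-on-spectral-sequence machinery applied to the change-of-rings situation.

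First I would fix a concrete model for the spectral sequence. Take a bicomplex model: resolve $\bF_2$ by the cobar construction $\Omega(A)$ over $A$, and filter it by cobar length in $\Gamma$ using the extension $\Gamma\to A\to \Lambda$. The associated spectral sequence is the change-of-rings spectral sequence with $E_2^{p,q}=\cotor^{p}_{\Gamma}(\bF_2,\cotor^{q}_{A}(\Gamma,\bF_2))$, converging to $\cotor_A(\bF_2,\bF_2)$. Next I would construct $\Sigma_2$-equivariant cup-$i$ products $\cup_i$ on this cobar-type complex, via a $\Sigma_2$-equivariant chain approximation to the diagonal (the standard $W$-construction). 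This is the same mechanism used in \fullref{sec41} to produce $\Sq^k$ on $\cotor_A(\bF_2,\bF_2)$; I would in fact build the approximation compatibly with the filtration so that the $\cup_i$ respect it up to a controlled shift. Then $\Sq^k$ on a class of bidegree $(p,q)$ is represented, as usual, by the image of $x\cup_{p+q-k}x$.

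The heart of the argument is the filtration bookkeeping. A chain representative of $\alpha\in E_r^{p,q}$ lies in filtration $p$, and its $\cup_i$-square lies in filtration $2p$ plus a correction whose size depends on how many times the boundary has acted. A careful degree count gives the three cases in the statement: for $k\le q$ the operation preserves $r$ and lands in $E_r^{p,q+k}$; for $q\le k\le q+r-2$ it lands earlier in the spectral sequence, namely in $E_{r+k-q}^{p+k-q,2q}$; and for larger $k$ one only controls survival up to $E_{2r-2}$. The compatibility of $\Sq^k$ with $d_r$, that is $d_t(\Sq^k\alpha)=\Sq^k d_r\alpha$, is an instance of a Kudo-type transgression formula: the Cartan-style chain identity $d(x\cup_i x)=(dx)\cup_i x+x\cup_i (dx)+x\cup_{i-1}x+x\cup_{i-1}x$ (mod $2$ the mixed terms collapse) forces the relation after identifying which filtration pieces survive to the relevant page. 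The $E_\infty$ statement (part (3)) is then automatic: the operations are defined at the cochain level on the filtered complex $\Omega(A)$, so they pass to associated graded, and the edge homomorphism $\pi_{p,q}$ is precisely the passage to the filtration quotient.

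The main obstacle, and the only truly delicate part, is determining the exact page on which $\Sq^k\alpha$ becomes a permanent boundary or permanent cycle; that is, pinning down the indices $t$ in part (1) and the three cases in the displayed homomorphism list. This requires iterating the secondary-operation argument from the Cartan formula above: at each application of $d$, the leftover $\cup_{i-1}$-term drops the pairing index, and one tracks how many such drops are permissible before the filtration estimate degrades. Once this bookkeeping is in place, parts (1), (2) and (3) follow in sequence, giving exactly the statement of \fullref{sec42}. I would then defer the tedious combinatorial verification to the cited treatment in Singer \cite{singer} and May \cite{may}, where it is done once and for all for filtered differential coalgebras.
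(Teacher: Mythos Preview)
The paper gives no proof of this theorem at all: it simply records it as ``a combined form of Theorems~2.15 and~2.17 in Singer's book \cite{singer}'' and uses it as a black box. Your proposal, by contrast, actually sketches the standard construction (cup-$i$ products on a filtered cobar complex, filtration bookkeeping, the Kudo-type argument for $d_t\Sq^k=\Sq^k d_r$) before deferring the combinatorics to Singer and May. So you are supplying strictly more than the paper does; there is nothing to compare on the paper's side beyond the citation. Your outline is the correct one and matches what Singer carries out, so there is no gap---but be aware that in the context of this paper no argument is expected here, only the reference.
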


\section{Cotorsion products}\label{sec5}

We refer the reader to  the book of Mimura and Toda \cite{mimuratoda}, Mimura \cite{mimura} and their references
for  the cohomology of compact Lie groups.  Recall that the mod $2$ cohomology of $\spin(n)$ is given as follows:
Let $E$ be the set $E$ defined in \fullref{sec2}. 
Let $\Delta$ be an algebra generated by $x_k$ with the relation $x_k^2=x_{2k}$ where $x_k=0$ if $k+1\not\in E$.
As an algebra over $\bF_2$, we have
\[
H^*(\spin(n);\bF_2)=\Delta\otimes  \mbox{\mathitt\char'003}( y_{2^s-1}).
\]
The reduced coproduct $\bar{\phi}$ is given by
\[
\bar{\phi}(x_{k})=0
\]
for $k+1\in E$ 
and 
\[
\bar{\phi}(y_{2^s-1})=\sum_{i+j=2^{s-1}} x_{2i}\otimes x_{2j-1}.
\]
In this section, by computing the change-of-rings spectral sequence associated with 
the extension of Hopf algebras:
\[
\Delta \to H^{*}(\spin(n);\bF_2) \to \mbox{\mathitt\char'003}(y_{2^s-1}),
\]
we prove \fullref{sec12}. 
The subalgebra $\Delta$ is the image of the induced homomorphism 
\[
\pi^*\co H^{*}(SO(n);\bF_2) \to H^{*}(\spin(n);\bF_2).
\]
The $E_2$-term of the spectral sequence is given by 
\[
\mathrm{Cotor}_{\Delta}(\bF_2, \mathrm{Cotor}_{H^{*}(\spin(n);\bF_2)}(\Delta, \bF_2)).
\]
We call this spectral sequence the change-of-rings spectral sequence.
As a matter of fact, it is nothing but the change-of-coalgebras spectral 
sequence in Section 2 of Moore and Smith \cite{mooresmith}. It is also noted in \cite{mooresmith} that the $E_2$--term is isomorphic to 
\[
\mathrm{Cotor}_{\Delta}(\bF_2, \bF_2) \otimes \mathrm{Cotor}_{\mbox{\mathits\char'003}(y_{2^s-1})}(\bF_2, \bF_2).
\]
For the sake of notational simplicity, let 
\begin{gather*}
A=H^*(\spin(n);\bF_2)\\
 B=H^{*}(SO(n);\bF_2).\tag*{\rm and}\end{gather*}
Firstly, we collect some results on $\cotor_B(\bF_2, \bF_2)$
 and the Rothenberg--Steenrod spectral sequence for the mod 2 cohomology of $BSO(n)$.
As an algebra, $B$ is generated by $x_i$ 
with the relations $x_i^2=x_{2i}$ where $x_i=0$ for $i\geq n$.
As a coalgebra, $x_i$ ($i=1, \ldots, n-1$) are primitive and $B$ is primitively generated.
So, the cotorsion product $\cotor_B(\bF_2, \bF_2)$ is a polynomial algebra
$
\bF_2[w_2 ,\ldots, w_n]
$
where $w_{k+1}$ is represented by $[x_{k}]\in \cotor^{1,k}_B(\bF_2, \bF_2)$.
It is also clear that the Rothenberg-Steenrod spectral sequence collapses at the $E_2$--level 
and hence
we have $H^{*}(BSO(n);\bF_2)=\bF_2[w_2, \ldots, w_n]$, where, by abuse of notation, 
we denote by $w_{k+1}$ the element in $H^{*}(BSO(n);\bF_2)$ represented by 
\[
w_{k+1}\in E_{\infty}^{1,k}=E_2^{1,k}
=\cotor_B^{1,k}(\bF_2, \bF_2).
\]
Let $v_0=w_2 \in \cotor_B(\bF_2, \bF_2)$.
For $1\leq k\leq s-1$, let \[
v_k=\underbrace{\Sq^{0}\cdots \Sq^0}_{\mbox{$k$-times}} v_0 \in \cotor_B(\bF_2, \bF_2).
\]
By the unstable condition in \fullref{sec41}, 
we have $v_k=w_{2^k+1}$.
$$
v_s=\sum_{i+j=2^{s-1}} w_{2i+1}w_{2j},\leqno{\rm Let}
$$
where we assume that $i,j\geq 0$ and $w_0=w_1=0$ and $w_i=0$ for $i>n$.
We define an element $v_{s+k}$ in $\cotor_B(\bF_2, \bF_2)$  for $k\geq 1$ by
$$v_{s+k}=\Sq^{2^{k-1}}\cdots \Sq^1 v_s.$$
\begin{gather*}
R=\bF_2[w_2, \ldots, w_n]/(v_0,\ldots, v_{s-1})\tag*{\rm Let}
\end{gather*}
be the polynomial ring generated by variables $w_k$
where $k$ ranges over the set $E$. This is isomorphic to the cotorsion product $\mathrm{Cotor}_{\Delta}(\bF_2, \bF_2)$.

We have the following proposition.

\begin{proposition}\label{sec51}\ \

\begin{itemize}
\item[{\rm (1)}] The polynomial $v_{2s-t+1}$ is zero  in $R$.
\item[{\rm (2)}]
If $\varepsilon=0$, then the polynomial $v_{2s-t}$ is also zero in $R$.
\end{itemize}
\end{proposition}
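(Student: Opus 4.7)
The plan is to produce an explicit closed-form expression for $v_{s+k}$ as a polynomial in $\bF_2[w_2,\ldots,w_n]$ and then check, monomial by monomial, that this polynomial lies in the ideal $(v_0,\ldots,v_{s-1})$ by combining the basic structure of $E$ and $D$ with the bounds of \fullref{sec23}.

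The central step is to establish, by induction on $k\geq 1$, the formula
\[
v_{s+k}=\sum_{\substack{a+b=2^s+1\\ a\text{ odd},\,b\text{ even}\\ a\geq 3,\,b\geq 2}}\bigl(w_{2^k(a-1)+1}\,w_b^{2^k}+w_a^{2^k}\,w_{2^k(b-1)+1}\bigr).
\]
The base case $k=1$ follows at once from applying the Cartan formula of \fullref{sec41} to each summand $w_aw_b$ of $v_s$, using the unstable identities $\Sq^0 w_c=w_{2c-1}$ and $\Sq^1 w_c=w_c^2$ deduced from $\Sq^0[x]=[x^2]$ and $\Sq^1[x]=[x]^2$. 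For the inductive step I would apply $\Sq^{2^{k-1}}$ to a typical summand $w_c\,w_b^{2^{k-1}}$ and distribute via Cartan; the unstable vanishing $\Sq^i[x]=0$ for $i\geq 2$ kills $\Sq^i w_c$ when $i\geq 2$, and an iterated-Cartan computation together with Lucas's theorem ($\binom{2^{k-1}}{j}\equiv 0\pmod 2$ for $0<j<2^{k-1}$) shows that $\Sq^j w_b^{2^{k-1}}=0$ for $0<j<2^{k-1}$. Hence only the term $(\Sq^0 w_c)(\Sq^{2^{k-1}}w_b^{2^{k-1}})=w_{2c-1}w_b^{2^k}$ survives, and $2c-1=2^k(a-1)+1$ provides the required next index; the symmetric summand is handled identically.

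To exploit the formula I would determine when a first-type monomial $w_{2^k(a-1)+1}\,w_b^{2^k}$ is nonzero in $R$. A generator $w_c$ vanishes in $R$ exactly when $c\notin E$, i.e.\ when $c>n$ or $\alpha(c-1)\leq 1$. Since $\alpha\bigl(2^k(a-1)\bigr)=\alpha(a-1)$, nonvanishing forces $\alpha(a-1)\geq 2$ and $2^k(a-1)+1\leq n$; likewise nonvanishing of $w_b^{2^k}$ requires $\alpha(b-1)\geq 2$ (which excludes $b=2$) and $b\leq n$. For $k\geq 1$ the condition $2^k(a-1)+1\leq n$ automatically yields $a\leq n$, so the entire set of conditions is precisely $a\in D$ together with $2^k(a-1)+1\leq n$.

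For part~(1) I would take $k=s-t+1$: \fullref{sec23}(1) says that every $a\in D$ already satisfies $2^{s-t+1}(a-1)+1>n$, contradicting the required $c\leq n$, so every first-type monomial vanishes in $R$. The symmetric monomial $w_a^{2^k}w_{2^k(b-1)+1}$ vanishes by the same reasoning applied to $b$, because \fullref{sec22} tells us $D$ is invariant under $a\leftrightarrow 2^s-a+1$. Part~(2) is the same argument with $k=s-t$ and \fullref{sec23}(2) in place of \fullref{sec23}(1), using the hypothesis $\varepsilon=0$. I expect the main technical obstacle to be the inductive computation of the closed form: the entire difficulty sits in identifying which Cartan terms survive after the mod-$2$ binomial vanishing, and once the formula is in hand the vanishing in $R$ follows rather mechanically from the arithmetic established in \fullref{sec2}.
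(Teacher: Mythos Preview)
Your argument is correct and follows essentially the same route as the paper: compute $\Sq^{2^{k-1}}\cdots\Sq^1(w_aw_b)=w_{2^k(a-1)+1}w_b^{2^k}+w_a^{2^k}w_{2^k(b-1)+1}$ via the unstable condition and the Cartan formula, observe that the surviving indices force $a,b\in D$, and then invoke \fullref{sec23} at $k=s-t+1$ (respectively $k=s-t$ when $\varepsilon=0$) to see that every term vanishes in $R$. The paper simply asserts the displayed formula in one line, whereas you supply the inductive justification (including the Lucas-type vanishing of $\Sq^j$ on $2^{k-1}$th powers), but the underlying idea is identical.
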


\begin{proof}
Suppose that $w_iw_j$ is a nonzero term in $v_s$.
By definition, it is easy to see that both $i$ and $j$ are in $D$.
By the unstable condition and by the Cartan formula in \fullref{sec41}
for $k\geq 1$, we have
\[
\Sq^{2^{k-1}}\cdots \Sq^1 w_iw_j=
w_i^{2^k}w_{2^{k}(j-1)+1}
+w_{2^k(i-1)+1}w_{j}^{2^k}.
\]
By \fullref{sec23}, we have 
\[
\Sq^{2^{k-1}}\cdots \Sq^1 w_iw_j=0
\]
in the case  $k\geq s-t$ or in the case $\varepsilon=0$ and $k=s-t-1$. 
\end{proof}

To prove \fullref{sec12}, we need the following result.

\begin{proposition}\label{sec52}
If $n\geq 9$ and if $n \not = 2^{s-1}+1$, then
the sequence $v_s, \ldots, v_{h'-1}$ is a regular sequence in $R$.
\end{proposition}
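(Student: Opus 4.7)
The plan is to invoke Lemma~\ref{sec34}, so it suffices to exhibit a term order on $R$ under which the leading monomials of $v_s, v_{s+1}, \ldots, v_{h'-1}$ are pairwise coprime.

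First, I would single out candidate leading monomials. Using the formula
\[
  \Sq^{2^{k-1}} \cdots \Sq^{1}(w_i w_j) = w_i^{2^k} w_{2^k(j-1)+1} + w_{2^k(i-1)+1} w_j^{2^k}
\]
from the proof of Proposition~\ref{sec51}, together with the identity
\[
  2^k(\sigma(k)^* - 1) + 1 = 2^k(2^{s-1-k} + 1) + 1 = 2^{s-1} + 2^k + 1 = \tau(k),
\]
where $\sigma(k)^* = 2^s + 1 - \sigma(k)$, the pair $(\sigma(k), \sigma(k)^*)$ appearing in $v_s$ produces the monomial $w_{\sigma(k)}^{2^k} w_{\tau(k)}$ inside $v_{s+k}$ for each $k = 0, 1, \ldots, s-t-1$; for $k = 0$ this is simply the term $w_{\sigma(0)} w_{\tau(0)}$ of $v_s$ itself. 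When $\varepsilon = 1$, the pair $(2^s + 1 - m, m)$ of $v_s$ likewise contributes $w_{m'} w_m^{2^{s-t}}$ to $v_{2s-t}$, using the definition $m' = 2^{s-t}(2^s - m) + 1$. These are my candidates for the leading monomials.

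Second, I would construct a weighted term order making these the actual leading monomials: assign weight $W_k$ to both $w_{\sigma(k)}$ and $w_{\tau(k)}$ for $0 \leq k \leq s-t-1$, weight $W_{s-t}$ to $w_m$ and $w_{m'}$ when $\varepsilon = 1$, and weight $0$ to every other variable, with a secondary lexicographic tie-breaker. The intended leading of $v_{s+k}$ then has weight $(2^k+1) W_k$ (and $(2^{s-t}+1) W_{s-t}$ for $v_{2s-t}$), and one has to choose positive $W_0, \ldots, W_{s-t}$ so that every other monomial of $v_{s+k}$ has strictly smaller weight. The verification proceeds pair-by-pair over the terms of $v_s$: contributions from $(\sigma(j), \sigma(j)^*)$- or $(\tau(j), \tau(j)^*)$-pairs with $j < k$ vanish in $R$ because their ``new'' first index $2^{s-1+k-j} + 2^k + 1 \geq 2^s + 2^k + 1 > n$; surviving contributions from pairs with $j > k$, as well as from the $m$-, $m'$-, and generic pairs, have weight bounded by suitable combinations of the $W_j$'s, with Proposition~\ref{sec23} eliminating additional long-range terms. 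One then picks the $W_k$'s satisfying the resulting finite system of strict inequalities.

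Finally, Propositions~\ref{sec24} and~\ref{sec25} together ensure that the indices $\sigma(k), \tau(k)$ for $0 \leq k \leq s-t-1$ and, when $\varepsilon = 1$, $m, m'$, are pairwise distinct, so the candidate leading monomials have pairwise disjoint variable supports and are coprime; Lemma~\ref{sec34} then yields the regularity. The main obstacle is the weight estimation in the second step: one must enumerate all surviving contributions to each $v_{s+k}$, paying particular attention to ``new indices'' that happen to land inside $C \cup \{m, m'\}$ and thereby introduce extra $W_j$-factors into competitor weights. The small cases $9 \leq n \leq 17$ not covered by Propositions~\ref{sec24} and~\ref{sec25} can be handled by direct inspection of the table in the proof of Proposition~\ref{sec13}: the sets $\{\sigma(k), \tau(k)\}$ (which may collapse to singletons when $\sigma(k) = \tau(k)$) and $\{m, m'\}$ remain pairwise disjoint there as well.
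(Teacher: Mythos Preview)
Your plan coincides with the paper's: reduce to Lemma~\ref{sec34} by choosing a term order on $R$ so that the leading monomial of $v_{s+k}$ is $w_{\sigma(k)}^{2^k}w_{\tau(k)}$ for $0\le k\le s-t-1$ (and $w_m^{2^{s-t}}w_{m'}$ when $\varepsilon=1$), then invoke Propositions~\ref{sec24} and~\ref{sec25} to conclude that these monomials have pairwise disjoint supports.  Your identification of the candidate monomials and your use of the vanishing of the $\ell<k$ contributions are exactly what the paper does.

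Where you diverge is in the construction of the order, and this is where your argument is incomplete.  You introduce free parameters $W_0,\ldots,W_{s-t}$, weight \emph{both} $w_{\sigma(k)}$ and $w_{\tau(k)}$ by $W_k$, and then assert that one can ``pick the $W_k$'s satisfying the resulting finite system of strict inequalities.''  You never write down that system or show it is consistent; and because you also weight the $\tau(j)$'s, you create additional competitors of the form $w_{\tau(j)}^{2^k}w_{\cdots}$ whose weights must be controlled.  The ``main obstacle'' you flag is real, and you have not resolved it.

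The paper avoids the whole issue with a much simpler order: weight~$1$ on $w_{\sigma(0)},\ldots,w_{\sigma(s-t+\varepsilon-1)}$ only (so on $C_0$, together with $m$ when $\varepsilon=1$), weight~$0$ elsewhere, and break ties by the lexicographic order with $w_{\sigma(0)}>w_{\sigma(1)}>\cdots$.  Then every surviving monomial of $v_{s+k}$ of top weight $2^k$ is one of the $w_{\sigma(\ell)}^{2^k}w_{2^k(2^s-\sigma(\ell))+1}$ with $\ell\ge k$, and the lex tie-breaker immediately selects $\ell=k$; no inequalities need to be solved.  For $10\le n\le 16$ the paper does not appeal to the table but simply writes $v_4,v_5,v_6$ out explicitly and reads off their leading terms under a degree reverse lexicographic order.
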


\begin{proof}
Firstly,  we deal with the case $10 \leq n \leq 16$.
In this case, $s=4$ and we have
\[
\begin{array}{lll}
v_4=w_7w_{10}+w_6w_{11}+w_4w_{13},&
v_5=w_{13}w_{10}^2+w_{11}^3+w_{7}w_{13}^2, &
v_6=w_{13}^5,
\end{array}
\]
where $w_i=0$ for $n<i\leq 16$.
We consider the degree reverse lexicographic order such that 
\[
w_{4}>w_{6}>w_{7}>w_{8}>w_{10}>w_{11}>w_{12}>w_{13}>w_{14}>w_{15}>w_{16}.
\]
For $n=13$, $14$, $15$, $16$, we have $t=1$ and $h'=7$
and the leading terms of $v_4$, $v_5$, $v_6$ are
$w_7w_{10}$, $w_{11}^3$, $w_{13}^5$, respectively. 
So, by \fullref{sec34}, we have the desired result.
For $n=11$, $12$, we have $t=2$, $h'=6$ and the leading terms of $v_4=w_{7}w_{10}+w_{6}w_{11}$,
 $v_5=w_{11}^3$ are
$w_7w_{10}$, $w_{11}^3$, respectively. So, by \fullref{sec34}, we have the desired result.
For $n=10$, we have $t=3$, $h'=5$ and 
it is clear that the sequence $v_4=w_7w_{10}$ is a regular sequence.

Next, we deal with the case $s\geq 5$, $n \not = 2^{s-1}+1$. In order to use \fullref{sec34}, 
we need to  define the term order on the set of monomials in $R$ as follows:
Suppose that
\[
x=w_{\sigma(0)}^{i_0}\cdots w_{\sigma(n-s-2)}^{i_{n-s-2}}, \;\;\;
y=w_{\sigma(0)}^{j_0}\cdots w_{\sigma(n-s-2)}^{j_{n-s-2}}.
\]
We define the weight of $x$ by
\[
w(x)=\sum_{\ell=0}^{s-t+\varepsilon-1} i_{\ell}.
\]
We say $x>y$ if
\begin{itemize}
\item[{\rm (1)}] $w(x)> w(y)$ or
\item[{\rm (2)}] $w(x)=w(y)$ and there is an integer $k$ such that $i_{\ell}=j_{\ell}$ for $\ell<k$
and $i_k>j_k$.
\end{itemize}

Since $2^{k}(2^s-\sigma(\ell))+1>n$ for $\ell<k$, we have $w_{2^k(2^s-\sigma(\ell))+1}=0$ for $\ell<k$. So, we obtain 
\[
v_{s+k}\equiv \sum_{\ell=k}^{s-t+\varepsilon-1} w_{\sigma(\ell)}^{2^k}w_{2^{k}(2^s-\sigma(\ell))+1}
\]
modulo terms with weight less than $2^k$.
The leading terms of $v_s, \ldots, v_{2s-t-1}$ are 
$w_{\sigma(0)}w_{\tau(0)},\ldots, w_{\sigma(s-t-1)}^{2^{s-t-1}}w_{\tau(s-t-1)}$
and the leading term of $v_{2s-t}$ is $w_{m}^{2^{s-t}}w_{m'}$ if $\varepsilon=1$.
By \fullref{sec24}, we have
\[
\gcd(w_{\sigma(k)}^{2^k}w_{\tau(k)}, w_{\sigma(k')}^{2k'}w_{\tau(k')})=1
\]
for $k\not = k' \in C_0$ and, 
by \fullref{sec25}, we have
\[
\gcd(w_{\sigma(k)}^{2^k}w_{\tau(k)}, w_m^{2^{s-t}}w_{m'})=1
\]
for $k \in C_0$ when $\varepsilon=1$.
Therefore, by \fullref{sec34}, we have that the sequence $v_s, \ldots, v_{2s-t+\varepsilon-1}$ is a regular sequence.
\end{proof}

By abuse of notation, we identify the above
\[
R=H^*(BSO(n);\bF_2)/(v_0,\ldots, v_{s-1})=\mathrm{Cotor}_{\Delta}(\bF_2,\bF_2)
\]
 with the image of 
\[
B\pi^*\co H^*(BSO(n);\bF_2)\to H^*(B\spin(n);\bF_2)
\]
 and with $E_2^{*,0}$ in the change-of-rings spectral sequence.
 Thus, we have
 \[
 E_2^{*,*}=R \otimes \bF_2[\zeta], \]
 where $\zeta\in E_2^{0,1}$ is the element represented by $[y_{2^s-1}]$.
Now, we complete the proof of \fullref{sec12}.

\begin{proof}[Proof of Theroem~\ref{sec12}]
Let us consider the cobar resolution 
\[
\bar{A} \stackrel{d}{\longrightarrow} \bar{A}\otimes \bar{A} \stackrel{d}{\longrightarrow} \bar{A}\otimes \bar{A}\otimes \bar{A} \rightarrow \cdots.
\]
It is clear that 
\[
d(y_{2^s-1})=\sum_{i+j=2^{s-1}} x_{2i}\otimes x_{2j-1}
\]
and so the element
\[
v_s=\sum_{i+j=2^{s-1}} w_{2i+1}w_{2j}
\]
 is zero in $\cotor_{A}(\bF_2, \bF_2)$. 
 Therefore, $v_s \in E_2^{2,0}$ is equal to $d_2(\zeta)$.
Hence, by \fullref{sec42}, we have that
both $\Sq^{2^{k-1}}\cdots \Sq^1 \zeta
\in E_{2}^{0, 2^k} $ and $\Sq^{2^{k-1}}\cdots \Sq^1 d_2 \zeta \in E_2^{2^k+1, 0}$ survive to the $E_{2^k+1}$--term and 
\[
d_{2^k+1}\Sq^{2^{k-1}}\cdots \Sq^1 \zeta=\Sq^{2^{k-1}}\cdots \Sq^1 d_2\zeta \in E_{2^k+1}^{0,2^k}.
\]
For $k=1, \ldots, h'-s-1$, we have, by the unstable condition, 
\[
\Sq^{2^{k-1}}\cdots \Sq^1 \zeta=\zeta^{2^k}
\]
and, by definition,  \[
\Sq^{2^{k-1}}\cdots \Sq^1 d_2\zeta=v_{s+k}.
\]
Since $v_s, \ldots, v_{h'-1}$ is a regular sequence in $R$
and since $E_2=R\otimes \bF_2[\zeta]$, 
we have, for $k=1, \ldots, h'-s-1$, 
\[
E_{2^k+1}=\cdots =E_{2^{k-1}+2}=R/(v_s, \ldots, v_{s+k-1}) \otimes \bF_2[\zeta^{2^{k}}].
\]
Moreover, we have
\[
E_{\infty}=E_{2^{h'-s-1}+2}=R/(v_s,\ldots, v_{h'-1}) \otimes \bF_2[\zeta^{2^{h'-s}}].
\]
It is clear that an algebra  homomorphism 
\[
\varphi\co H^*(BSO(n);\bF_2) \otimes \bF_2[z'] \rightarrow
H^{*}(B\spin(n);\bF_2)
\]
defined by $\varphi(w_k\otimes 1)=B\pi^*(w_k)$ and $\varphi(1\otimes z')=z''$, where 
$z''$ represents $\zeta^{2^{h'-s}}\in E_{\infty}^{0,2^{h'-s}}$,
induces an isomorphism
\[
R \otimes \bF_2[z']/(v_s\otimes 1, \ldots, v_{h'-1}\otimes 1) \to \cotor_A(\bF_2, \bF_2).
\]
So there is no extension problem and 
it completes the proof of \fullref{sec12}.
\end{proof}

\bibliographystyle{gtart}
\bibliography{link}

\begin{thebibliography}{}
\providecommand\bibmarginpar{\leavevmode\marginpar}
\def\urlstyle#1{{\tt #1}}

\bibitem{adams}
\textbf{W\,W Adams}, \textbf{P Loustaunau}, \emph{An introduction to
  {G}r\"obner bases}, Graduate Studies in Mathematics 3, Amer. Math. Soc.,
  Providence, RI (1994) \xox{MR}{1287608}

\bibitem{eisenbud}
\textbf{D Eisenbud}, \emph{Commutative algebra}, with a view toward algebraic
  geometry, Graduate Texts in Mathematics 150, Springer, New York (1995)
  \xox{MR}{1322960}

\bibitem{kamekomimura}
\textbf{M Kameko}, \textbf{M Mimura},
  \href{http://dx.doi.org/10.2140/gtm.2007.10.213} {\emph{On the
  Rothenberg-Steenrod spectral sequence for the mod 3 cohomology of the
  classifying space of the exceptional Lie group $E_8$}}, from: ``Proceedings
  of the Nishida Fest (Kinosaki 2003)'', (M Ando, N Minami, J Morava, W\,S
  Wilson, editors), Geom. Topol. Monogr. 10 (2007)  213--226

\bibitem{kuribayashimimuranishimoto}
\textbf{K Kuribayashi}, \textbf{M Mimura}, \textbf{T Nishimoto}, \emph{Twisted
  tensor products related to the cohomology of the classifying spaces of loop
  groups}, Mem. Amer. Math. Soc. 180 (2006) vi+85 \xox{MR}{2203859}

\bibitem{may}
\textbf{J\,P May}, \emph{A general algebraic approach to {S}teenrod
  operations}, from: ``The Steenrod Algebra and its Applications (Proc. Conf.
  to Celebrate N\,E Steenrod's Sixtieth Birthday, Battelle Memorial Inst,
  Columbus, Ohio, 1970)'', Lecture Notes in Mathematics 168, Springer, Berlin
  (1970)  153--231 \xox{MR}{0281196}

\bibitem{mimura}
\textbf{M Mimura}, \emph{Homotopy theory of {L}ie groups}, from: ``Handbook of
  algebraic topology'', North-Holland, Amsterdam (1995)  951--991
  \xox{MR}{1361904}

\bibitem{mimuratoda}
\textbf{M Mimura}, \textbf{H Toda}, \emph{Topology of {L}ie groups. {I}, {II}},
  Translations of Mathematical Monographs 91, Amer. Math. Soc., Providence, RI
  (1991) \xox{MR}{1122592}\ \ Translated from the 1978 Japanese edition by the
  authors

\bibitem{mooresmith}
\textbf{J\,C Moore}, \textbf{L Smith}, \href{http://dx.doi.org/10.2307/2373293}
  {\emph{Hopf algebras and multiplicative fibrations. {II}}}, Amer. J. Math. 90
  (1968) 1113--1150 \xox{MR}{0238323}

\bibitem{quillen}
\textbf{D Quillen}, \href{http://dx.doi.org/10.1007/BF01350050} {\emph{The
  {${\rm mod}$} {$2$} cohomology rings of extra-special {$2$}-groups and the
  spinor groups}}, Math. Ann. 194 (1971) 197--212 \xox{MR}{0290401}

\bibitem{ravenel}
\textbf{D\,C Ravenel}, \emph{Complex cobordism and stable homotopy groups of
  spheres}, Pure and Applied Mathematics 121, Academic Press, Orlando, FL
  (1986) \xox{MR}{860042}

\bibitem{singer}
\textbf{W\,M Singer}, \emph{Steenrod squares in spectral sequences},
  Mathematical Surveys and Monographs 129, Amer. Math. Soc., Providence, RI
  (2006) \xox{MR}{2245560}

\end{thebibliography}

\end{document}